\newcommand{\ds}{\displaystyle}
\newcommand{\op}{\mathcal}
\newcommand{\cdc}{,\dots,}
\newcommand{\ut}{\mathsf{T}}
\newcommand{\st}{\mathsf{t}}
\newcommand{\EG}{\widetilde{EG}} 
\numberwithin{equation}{section}
\newtheorem{theorem}{Theorem}[section]
\theoremstyle{plain}
\newtheorem{assumption}[theorem]{Assumption}
\newtheorem*{nonumbertheoremA}{Theorem A}
\newtheorem{conjecture}[theorem]{Conjecture}
\newtheorem{corollary}[theorem]{Corollary}
\newtheorem{lemma}[theorem]{Lemma}
\newtheorem{proposition}[theorem]{Proposition}
\theoremstyle{definition}
\newtheorem{definition}[theorem]{Definition}
\newtheorem{remark}[theorem]{Remark}
\newtheorem{construction}[theorem]{Construction}
\begin{document}

\title{Intertwining for semi-direct product operads }
\author{Benjamin C. Ward}
\email{bward@math.su.se}


\begin{abstract} This paper shows that the semi-direct product construction for $G$-operads and the levelwise Borel construction for $G$-cooperads are intertwined by the topological operadic bar construction.  En route we give a generalization of the bar construction of M.\ Ching from reduced to certain non-reduced topological operads.  \end{abstract}
\maketitle


Let $\mathsf{B}$ denote the bar construction for operads in based spaces which, after \cite{Ching}, is known to carry the structure of a cooperad when the input is reduced.  Let $G$ be a topological group and let $-\rtimes G$ denote the semi-direct product construction \cite{Marklsdp,SW} whose output is (for non-trivial $G$) never reduced.  An action of $G$ on a reduced operad $\op{P}$ induces an action of $G$ on the cooperad  $\mathsf{B}(\op{P})$, which in turn induces the structure of a cooperad on the homotopy orbits $\mathsf{B}(\op{P})_{hG}$ modeled by the levelwise Borel construction $EG_+\wedge_G \mathsf{B}(\op{P})$.  In this paper we prove:

\begin{nonumbertheoremA} Let $\op{P}$ be a reduced $G$-operad in based topological spaces.  Then $\mathsf{B}(\op{P}\rtimes G)$ carries the structure of a cooperad for which there is a homotopy equivalence of cooperads:
	\begin{equation*}
	 EG_+\wedge_G \mathsf{B}(\op{P}) \stackrel{\sim}\longrightarrow \mathsf{B}(\op{P}\rtimes G).
	\end{equation*}
\end{nonumbertheoremA}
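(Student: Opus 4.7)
The plan is to describe both sides using Ching's tree model, construct an explicit cellular comparison map, and verify its properties combinatorially.

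First, to obtain the cooperad structure on $\mathsf{B}(\op{P}\rtimes G)$ I would invoke the generalization of Ching's bar construction promised in the abstract. Although $(\op{P}\rtimes G)(1)=G_+$ (so $\op{P}\rtimes G$ is non-reduced), its non-reducedness is carried entirely by the well-pointed topological monoid $G$, which is exactly the situation the generalization is designed to handle. I would then describe a point of $\mathsf{B}(\op{P}\rtimes G)(n)$ as a rooted metric tree with $n$ leaves, each vertex decorated by an operation of $\op{P}$ and each incoming edge decorated by an element of $G$; this repackages a vertex-decoration $(p,g_1,\dots,g_k)\in \op{P}(k)\wedge G_+^{\wedge k}$ as $G$-labels on the edges.

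Second, using this tree description I would define the map
\begin{equation*}
\Phi\colon EG_+\wedge_G \mathsf{B}(\op{P}) \longrightarrow \mathsf{B}(\op{P}\rtimes G)
\end{equation*}
by sending $[e,T]$ to $T$ with edge $G$-labels read off from the simplicial coordinates of $e\in EG$. The diagonal $G$-action on the source is absorbed into the twisting rules on the target, so that the map is well-defined on the Borel quotient. To show $\Phi$ is a homotopy equivalence I would filter both sides by the number of internal edges; on each filtration quotient, $\Phi$ becomes, in essence, the standard equivalence $EG_+\wedge_G(X\wedge G_+^{\wedge r})\to X$ arising from the $G$-freeness of $G_+^{\wedge r}$.

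Third, to check that $\Phi$ is a cooperad morphism I would verify that the decomposition of a tree at an internal edge intertwines with the Alexander--Whitney diagonal $EG\to EG\times EG$ on the source and with the redistribution of the edge $G$-labels across the two resulting subtrees on the target. The semi-direct-product twisting forces an action of the cut edge's $G$-label on the decorations of the root subtree, and this action must match exactly what the second factor of the $EG$-diagonal contributes.

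The principal obstacle lies in this last compatibility: aligning the combinatorial $G$-twist appearing under cooperadic decomposition of a $\op{P}\rtimes G$-decorated tree with the contribution of the Borel-construction diagonal. A secondary technical point is ensuring that the generalized bar construction handles the unary $G_+$-vertices in a manner that correctly models the homotopy orbit $\mathsf{B}(\op{P})_{hG}$ up to the required equivalence.
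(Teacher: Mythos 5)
Your overall strategy — describe both sides via trees, build an explicit comparison map, then check cooperad compatibility — matches the paper's. But the crucial step, establishing that the comparison map is a homotopy equivalence, is argued along genuinely different and, as written, incomplete lines. The paper does not use a filtration argument: it builds an explicit retraction $\pi\colon\mathsf{B}(\op{P}\rtimes G)(n)\to EG_+\wedge_G\mathsf{B}(\op{P})(n)$ and then an explicit deformation retraction $H$ (the ``rising water level'' homotopy) from the identity to $\sigma\circ\pi$, so that one gets an actual homotopy equivalence in each arity rather than only a weak equivalence.

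Two concrete concerns with the filtration route as you sketch it. First, your description of a point of $\mathsf{B}(\op{P}\rtimes G)(n)$ as a metric tree with single elements of $G$ on edges is only a prequotient representative; after the bar identifications (zero weights, unit vertices, and the semi-direct-product twist) the unstable data on each branch chains together into a point of $B(G_+)$, and in the paper's normal form the non-root branches carry points of $\widetilde{EG}$ and the root branch a point of $BG$ (Lemma \ref{marklem}). Consequently the claim that the map on filtration quotients is ``in essence'' $EG_+\wedge_G(X\wedge G_+^{\wedge r})\to X$ does not hold: the $G$-factors are not finite products of $G$, they are bar constructions of $G$, and the filtration by number of internal edges does not by itself reduce to the free-orbit situation you invoke. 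Making a filtration argument rigorous would require a double filtration (internal branches, and simplicial degree within each branch label), and you would have to check compatibility of the filtration with both the bar identifications and the SDP identifications, which is precisely the kind of work the paper's explicit homotopy sidesteps.

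Second, you do not specify the map $\Phi$ precisely enough to verify the cooperad compatibility you flag as the ``principal obstacle.'' The paper's map $\sigma$ labels the root branch by $\zeta\in EG$, internal branches by $\gamma(\zeta)\in B(G,G,G)$ (with $\gamma$ placing the inverse of the total product in the left module slot), and leaf branches by $\mathrm{pr}_r(\gamma(\zeta))$. This choice is what makes the diagonal on $EG$ match degrafting: after cutting, the newly-created root branch of the bottom factor acquires a zero-weight root edge, collapsing $\gamma(\zeta)$ back to $\zeta$. Your description ``edge $G$-labels read off from the simplicial coordinates of $e\in EG$'' leaves exactly this choice unspecified, and an unlucky choice would not be a cooperad map. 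Finally, the degenerate arities $n=0,1$ (where $\mathsf{B}(\op{P}\rtimes G)(1)\cong BG_+$ and there is no stable tree) need to be handled separately, which your proposal does not address.
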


Bar-cobar duality for topological operads and cooperads is a derived model for Koszul duality of the associated (co)homology operads.  The purpose of Theorem A is to better understand the topological underpinnings of Koszul duality in certain examples of interest.  These examples include the homologies of little disks, framed little disks, moduli spaces of surfaces, and their compactifications.

For example, we may consider the case $\op{P}=D_d$,  the reduced and based variant of the operad of little $d$-disks.  This is an operad in $SO(d)$ spaces, and the semi-direct product is the framed little disks, denoted $fD_d$.  Then our result yields $ESO(d)_+ \wedge_{SO(d)}\mathsf{B}(D_d)\sim \mathsf{B}(fD_d)$.  We have thus reduced the study of the dual of the framed little disks to the study of (equivariant) self duality of the unframed little disks.  

This result is interesting for several reasons.  First, it gives a topological foundation to the results of \cite{DCV} who prove a rational homotopy level analog of this result in the case $d=2$: that the algebraic dual of the operad for Batalin-Vilkovisky algebras $BV\cong H_\ast(fD_2)$ is a chain model for the $S^1$-equivariant homology of the little $2$-disks.  Using this result the authors are able to give a minimal cofibrant resolution of this operad.  Theorem A is a step toward a spectra level enhancement of this result: that the (non-reduced) homotopy fixed point operad of the little disks \cite{Westerland} is derived Koszul dual to the stabilization of $fD_2$ in spectra (see Section $\ref{discsec})$.  

Second, it allows us to propose topological and spectral manifestations of the algebraic Koszul duality between the homology of moduli spaces of punctured spheres (the gravity operad) and their Deligne-Mumford compactifications (the hyper-commutative operad).  
Here, however, we hasten to add that the conclusions we can draw could be strengthened by fully developing the homotopy theory of bar-cobar duality for non-reduced operads in spaces and spectra and by establishing the equivariant self-duality of little disks in spectra.  This paper motivates these future directions as we discuss in Section $\ref{discsec}$.

Finally, we mention the recent preprint \cite{KWill} in which the authors (working with real-algebraic graph complex models for $D_2$) assert that the homotopy type of a semi-direct product should be recoverable from the equivariant topology of the original operad. Our result may be viewed as a topological manifestation of this statement.

This paper is organized as follows.  Necessary background and conventions are established in Section $\ref{background}$.  In Section $\ref{cooperad}$ we recall the cooperad structure on the bar construction of a reduced operad and give a reasonably straight forward generalization to the non-reduced case (Proposition $\ref{coopmap}$).  We furthermore show that this bar construction intertwines the left and right adjoints of inclusion from reduced to non-reduced operads.  In Section $\ref{mainthmsec}$ we prove Theorem A by first constructing an explicit level-wise homotopy equivalence of $S_n$ modules between $EG_+\wedge_G \mathsf{B}(\op{P})(n)$ and $\mathsf{B}(\op{P}\rtimes G)(n)$ and then showing it is compatible with the cooperad structure from the prior section.  Finally in Section $\ref{discsec}$ we propose a connection between these results and topological Koszul duality for moduli spaces of punctured surfaces and their Deligne-Mumford compactifications.  A brief appendix outlines our conventions on $BG$, $EG$ and associated diagrammatics.

\section{Prerequisites} \label{background}

\subsection{Trees}  A graph $\ut$ is a list $(V,F,F\to V,F\to F)$ which consists of a finite set of vertices $V$, a finite set of flags (also know as half edges) $F$, a map of adjacencies $F\to V$ (telling us which vertex a flag is adjacent to) and an involution $F\to F$ (whose orbits are defined to be the edges of the graph; denote this set Ed($\ut$)).  In particular a graph has internal edges (orbits of size two) and external edges (orbits of size one).  The valence of a vertex is the number of flags adjacent to it and we define the arity of a vertex to be one less than the valence.  

Recall that a rooted tree is a connected, genus $0$ graph along with a distinguished external edge called the root edge.  We view rooted trees as directed graphs, directed toward the root edge, and use corresponding terminology such as originating, terminating, toward, incoming, and outgoing in the usual fashion.  We call the vertex adjacent to the root edge the root vertex.

A leaf in a rooted tree is an external edge which is not the root edge.  A leaf-labeled rooted tree is a rooted tree along with a bijection from the set of leaves to $\{1\cdc n\}$, for some $n\geq 1$.  
We use the following short hand terminology in this paper:

\begin{definition}  From now on in this paper the word tree refers (with one exception; see Remark $\ref{emptytreermk}$) to an isomorphism class of leaf-labeled rooted trees having no vertices of arity $0$.  Such a tree is called an $n$-tree if it has $n$ leaves.
\end{definition}

\begin{remark}\label{emptytreermk}
The fact that we prohibit vertices of arity $0$ is a convention which is consistent with studying operads with trivial arity $0$ term.  This convention has several advantages but also one defect: we must allow the ``tree'' with one edge and no vertices.  We call this the empty tree.
\end{remark}

In this paper the distinction between stable and unstable trees will be important.

\begin{definition} Basic terminology related to stable trees.
\begin{enumerate}
\item  A vertex in a tree is called a {\bf stable vertex} if its arity is greater than or equal to $2$.
\item  A tree is called a {\bf stable tree} if all of its vertices are stable.
\item  A tree is called an {\bf unstable tree} if it is not stable.
\item  The {\bf underlying stable tree} of a tree $\ut$ is the stable tree formed by removing all unstable vertices and identifying the appropriate adjacent flags.
\item  A {\bf branch} of a tree is an edge of its underlying stable tree.
\end{enumerate}
\end{definition}

We will often write $\ut$ to denote a tree (which in general is unstable), and write $\st$ for a tree which is necessarily stable.  Notice that trees with only $1$ leaf have underlying stable tree equal to the empty tree (see Remark $\ref{emptytreermk}$), since they have no stable vertices.  We refer to such a tree as having one branch.

The figure below diagrams some of our terminology. Note the leaf labeling is suppressed.

\begin{figure}[h]
	\centering
	\includegraphics[scale=.7]{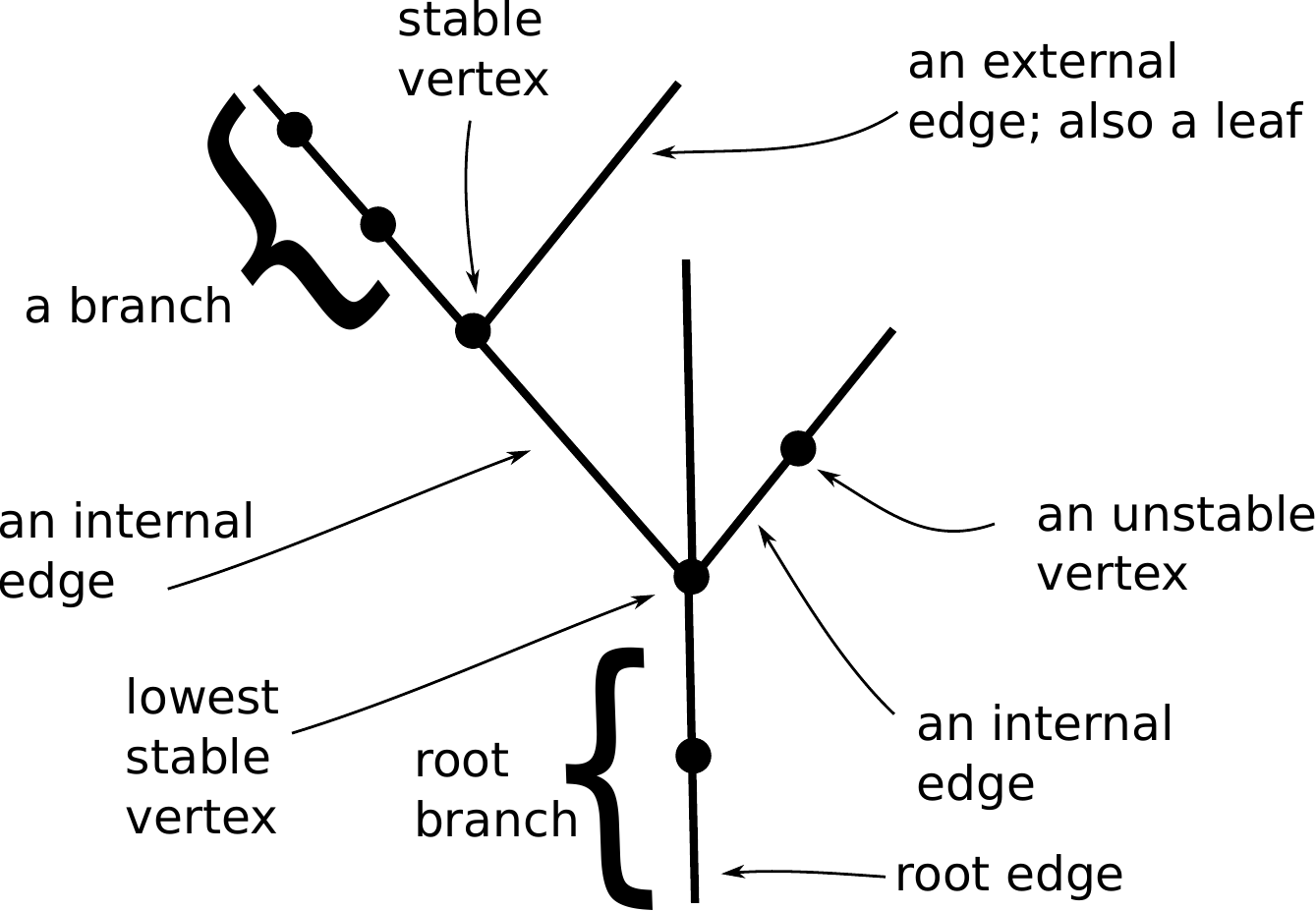}
	\label{fig:basictree}
\end{figure}

\begin{definition}
	A weighted tree is a pair $(\ut,w)$ where $\ut$ is a tree and $w\in Map(\text{Ed}(\ut),[0,1])$ such that the sum over the image of $w$ of each leaf to root path is $1$.
\end{definition}

\begin{definition}\label{wtterms}
We use the following terminology in reference to a weighted tree $(\ut,w)$.
\begin{itemize}
\item  $|E|$ is the weight of the edge $E$.
\item  The altitude, denoted $|v|$, of a vertex $v$ is the sum of the weights on edges below it.
\item  We let $v_E$ be the vertex above the edge $E$ (if applicable) and $v^E$ be the vertex below $E$ (if applicable).
\item  If $E$ is the root edge (so $v^E$ is not itself a well defined notation) we define $|v^E|:=0$.  Likewise if $E$ is a leaf edge we define $|v_E|:=1$. 
\item Let $s\in I$ (the unit interval) and let $E$ be an edge.  We say:
\begin{itemize}
	\item  $E$ is not yet active at $s$ if $s\leq |v^E|$,
	\item  $E$ is active at $s$ if $|v^E|<s<|v_E|$,
	\item  $E$ is no longer active at $s$ if $|v_E|\leq s$.
\end{itemize}
\item  We say a vertex is no longer active if the edge below it is no longer active.
\end{itemize}
\end{definition}

Associated to a weighted tree $(\ut, w)$ we have a stable weighted tree $(\st,w^\prime)$ by taking the underlying stable tree $\st$ of $\ut$ and adding the edge weights of $w$ corresponding to each branch to form the edges weights $w^\prime$.

\begin{definition}  For a tree $\ut$ and $\op{O}$ an $\mathbb{S}$-module in based spaces we define 
	\begin{equation*}
\op{O}(\ut)=\ds\bigwedge_{v\in V(\ut)}\op{O}(\text{in}(v))
	\end{equation*}
where in($v$) is the set of incoming flags at $v$, and where we extend $\op{O}$ to a functor from finite sets and bijections in the usual way (left Kan extension along the inclusion of the $\mathbb{N}$ skeleton as discussed below). 
	
	An $\op{O}$-labeling of a tree $\ut$ is a point $p_\ast\in \op{O}(\ut)$.  An $\op{O}$-labeled tree is a tree and an $\op{O}$-labeling.  An $\op{O}$-labeled, weighted tree is a tree which is both $\op{O}$-labeled and weighted.
\end{definition}

\subsection{Bar Construction}\label{barsec}
In this section we recall the simplicial bar construction in the context of operads following e.g.\ \cite{Ching}.  We work in Top$_\ast$, a nice category of based topological spaces.  We would eventually like to conclude that sequential continuity implies continuity, so nice means first-countable.

\subsubsection{Monoidal Description}  
Operads in Top$_\ast$ can be defined as monoids in a monoidal category $(\mathbb{S}$-Mod, $\op{I}$, $\circ)$.  Here $\mathbb{S}$-Mod is the category of $\mathbb{S}$-modules (also called symmetric sequences); the objects are $\mathbb{N}$ indexed sequences of $S_n$ modules and the morphisms are given level-wise.  The monoidal product is defined via:
\begin{equation*}
(X\circ Y)(n):= \ds\bigvee_{\substack{{n=r_1+...+r_m} \\ r_i\geq 1}} X(m)\wedge_{S_m}\text{Ind}_{\times_iS_{r_i}}^{S_n}(Y(r_1)\wedge...\wedge Y(r_m))
\end{equation*}
The unit $\op{I}$ is defined by $\op{I}(n)= \ast$ for $n\neq 1$ and $\op{I}(1)=S^0$.

The monoidal category $(\mathbb{S}$-Mod, $\op{I}$, $\circ)$ is monoidally equivalent to a category $(\mathbb{S}$et-Mod, $\op{I}$, $\circ)$.  Its objects are functors from the category of finite sets and bijections to Top$_\ast$ and the morphisms are level-wise.  There is a functor $\mathbb{S}$et-Mod $\to \mathbb{S}$-Mod given by restriction (viewing $n=\{1 \cdc n\}$) and this restriction functor has a left adjoint via left Kan extension.  The monoidal product on $\mathbb{S}$et-Mod can be written:

\begin{equation*}
(X\circ Y)(A):= \ds\bigvee_{A=\coprod A_j} X(J)\wedge (\wedge_{j\in J}Y(A_j))
\end{equation*}
where the $\bigvee$ is taken over all nonempty partitions of $A$.  We will move between these equivalent monoidal categories without much ado.

We say an $\mathbb{S}$-module is pointed if its arity $0$ term is $\ast$.  Likewise we say a $\mathbb{S}$et-module is pointed if it sends the empty set to $\ast$.  Let us call an operad pointed if its underlying $\mathbb{S}$-module is.

\begin{assumption}\label{pointed}
	From now on we assume all $\mathbb{S}$-modules, $\mathbb{S}et$-modules, and operads are pointed unless explicitly stated otherwise.
\end{assumption}
 
Recall that an operad $\op{Q}$ along with an operad map $\op{Q}\to \op{I}$ is called an augmented operad.  We may now defined the bar construction of an augmented operad.

\begin{definition}
	Let $\op{Q}$ be an augmented operad.  We define $\mathsf{B}(\op{Q})$ to be the $\mathbb{S}$-module given by the level-wise geometric realization of the two sided bar construction  $\op{B}_\bullet(\op{I},\op{Q},\op{I})$.  Explicitly this means;
	\begin{equation*}
	\mathsf{B}(\op{Q})(n)= \left[\ds\bigvee_{r\geq 0} \left(\op{Q}^{\circ r}(n)\wedge (\Delta_r)_+\right)\right]/\sim
	\end{equation*}
	
where $\sim$ is generated by identifying faces/cofaces (via multiplication and $\op{Q}\to \op{I}$) and degeneracies/codegeneracies (via $\op{I}\to \op{Q}$) as per usual.
\end{definition}

\subsection{Combinatorial Description of the Bar Construction}  We now give a combinatorial description of $\mathsf{B}(\op{Q})$ in terms of weighted $\op{Q}$-labeled trees.  This is basically standard and we follow \cite{Ching}, except we do not assume $\op{Q}$ is reduced (i.e. we do not assume $\op{Q}(1)= S^0$).

First consider the space of all weightings of a rooted tree $\ut$, denoted $w(\ut)$ as defined above.  Form the quotient space $\bar{w}(\ut)= w(\ut)/w_0(\ut)$, where $w_0(\ut)$ is the subspace where at least one leaf or root {\it branch} has weight $0$.  This is viewed as a based space via the quotient point.\footnote{In the case with only one leaf (so only one branch) the set $w_0(\ut)$ is empty and we take the convention (after \cite{Ching}) that quotienting by the empty set adjoins a base point, denoted $(-)_+$.}

Since $\op{Q}$ has an operadic unit, i.e.\ a map of based spaces $S^0\to \op{Q}(1)$ satisfying the usual unit axioms, we may define $e\in \op{Q}(1)$ to be the image of the point which is not the base point.

\begin{lemma}\label{description}  There is a homeomorphism of $S_n$ modules:
\begin{equation}\label{combdesc}
\mathsf{B}(\op{Q})(n)\cong \left[\bigvee_{\text{n-trees } \ut}  \op{Q}(\ut)\wedge \bar{w}(\ut)\right]/\sim
\end{equation}
where the equivalence relation is generated by the following identifications:
\begin{enumerate}
	\item A $0$ weight on an internal edge is identified with operadic composition across said edge.
	\item A label of $e$ on a vertex of arity 1 is identified with removing that vertex and adding the adjacent weights.
	\item A root or leaf edge of weight $0$, on a branch of non-zero weight, is identified with the tree formed by applying the augmentation to the adjacent unstable vertex label.
\end{enumerate}
\end{lemma}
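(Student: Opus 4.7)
The plan is to translate the simplicial two-sided bar construction into the combinatorial language of weighted $\op{Q}$-labeled trees, essentially following the strategy of \cite{Ching}, but carefully tracking the additional content that arises from the augmentation on arity-$1$ vertex labels when $\op{Q}$ is not reduced.

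First I would decompose $\op{Q}^{\circ r}(n)$ as a wedge over $r$-leveled $n$-trees. Iterating the $\mathbb{S}\text{et}$-$\mathrm{Mod}$ formula for $\circ$ presents $\op{Q}^{\circ r}(n)$ as a wedge indexed by $n$-trees whose vertex set carries a level function $V(\ut)\to\{1,\dots,r\}$ sending every child of a level-$i$ vertex to level $i+1$, with summand $\op{Q}(\ut)$. Parameterizing $\Delta_r$ by barriers $0\leq s_1\leq\dots\leq s_r\leq 1$, and adopting the conventions $s_0:=0$, $s_{r+1}:=1$, I assign to each edge the weight $s_{i+1}-s_i$, where $i$ is the level of the vertex below that edge. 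The sum of weights along any leaf-to-root path is then $1$, and this yields a continuous $S_n$-equivariant level-wise map $\op{Q}^{\circ r}(n)\wedge(\Delta_r)_+ \to \bigvee_{\ut}\op{Q}(\ut)\wedge w(\ut)$.

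Next I would identify the simplicial face and degeneracy relations with the three listed relations. An inner coface $\delta_i$ with $0<i<r$ parameterizes the locus $s_i=s_{i+1}$, on which every edge between levels $i$ and $i+1$ has weight zero, while the associated face map $d_i$ composes the labels across those edges via the operad structure; this is relation (1). The codegeneracies insert a level of arity-$1$ vertices labeled by the unit $e\in\op{Q}(1)$ with zero-weight edges on either side, giving relation (2). The outer cofaces $\delta_0,\delta_r$ parameterize $s_1=0$ and $s_r=1$ respectively, and the outer face maps $d_0,d_r$ apply the augmentation $\epsilon\colon\op{Q}\to\op{I}$ to the root-level or leaf-level labels. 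After passing to $\bar w(\ut)=w(\ut)/w_0(\ut)$, any configuration in which the root or leaf \emph{branch} has weight $0$ is collapsed to the basepoint, so the only nontrivial content of the outer faces occurs when the zero-weight root or leaf edge is separated from the nearest stable vertex by one or more unstable arity-$1$ vertices along a branch of positive total weight; applying $\epsilon$ to the adjacent unstable vertex label then produces relation (3).

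Finally I would verify that the assembled map descends to the claimed homeomorphism. Surjectivity is immediate, since every weighted labeled tree admits a leveled simplicial representative by recording the altitudes at which its vertices sit. Injectivity and continuity follow from the CW-structures on both sides together with the fact that the equivalence relation on the right is generated precisely by the image of the simplicial identifications. The main obstacle I anticipate is relation (3): since $\op{Q}$ is non-reduced, two distinct phenomena occur simultaneously at the outer boundary of $\Delta_r$, namely the collapse of zero-weight root or leaf branches and the augmentation on labels of unstable arity-$1$ vertices, and these must be disentangled cleanly. The degenerate case of the one-branch tree, in which the empty-tree convention and the adjoined basepoint from the footnote interact directly with $\epsilon_1\colon\op{Q}(1)\to S^0$, also requires separate attention.
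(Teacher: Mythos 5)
Your proposal follows essentially the same strategy as the paper's own proof: identify $\op{Q}^{\circ r}(n)$ with $\op{Q}$-labeled $r$-level $n$-trees, use barrier coordinates on $\Delta_r$ to record the common altitudes of each level (so that each edge gets the difference of consecutive barriers, with $s_0=0$ and $s_{r+1}=1$), and then match the simplicial identifications to relations (1)--(3), with the inner faces giving (1), the degeneracies giving (2), and the outer faces giving (3). Your discussion of the outer faces, and in particular of how after the quotient by $w_0(\ut)$ the only residual content is the augmentation on unstable vertex labels adjacent to a zero-weight root or leaf edge on a branch of positive weight, captures exactly what is new here relative to the reduced case of Ching, as does your flag of the one-branch degenerate case.

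One small inaccuracy: you describe the codegeneracy identifications as inserting an $e$-labeled arity-$1$ level ``with zero-weight edges on either side.'' In fact the degeneracy $s_i$ may insert the $e$-level at \emph{any} altitude between the neighboring levels, and the identification $(s_i x, t)\sim (x, \sigma^i t)$ removes that vertex and adds the two adjacent edge weights whatever they are; restricting to zero-weight adjacent edges would not generate the full relation (2). Since you nevertheless correctly cite relation (2) as the outcome, this is only a slip in the description and not a gap in the argument.
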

\begin{proof} 
Recall that a tree is a level tree if it has the same number of vertices on each directed path from a leaf to the root.  The number of vertices is called the number of levels.  A weighted level tree is a level tree which is weighted such that the vertices along each root to leaf path are at the same altitudes.
	
It is standard to identify $\op{Q}^{\circ r}(n)$ with the space of $\op{Q}$-labeled $n$-trees having $r$ levels.  From this we may identify $\mathsf{B}(\op{Q})(n)$ as equivalence classes of weighted such trees.  It is also standard that, using the operadic unit, every point in the expression in line $\ref{combdesc}$ can be represented by a weighted level tree.  This is done by simply adding unary vertices labeled by $e$ as needed to level the tree.

Under this correspondence, it is straight forward to verify that items (1) and (3) in the statement correspond to the (co)face identifications and item (2) corresponds to the (co)degeneracy identifications. \end{proof}

\begin{remark} If the operad $\op{Q}$ happens to be reduced then every point in the bar construction can be represented by a stable tree.  In this case the notions of edges and branches coincide, and the content of this Lemma is manifest as Proposition 4.13 of \cite{Ching}.  In general however, our situation differs from loc.cit.\ in that it is possible to have a root or leaf edge of weight $0$ which is not identified with the base point, provided its branch is not of weight $0$.  This situation is accommodated in item 3 above.  
\end{remark}

Using this lemma, and abusing notation, we may denote points in $\mathsf{B}(\op{Q})(n)$ by $(\ut, q_\ast)$, where $\ut$ is now a weighted tree, which can be chosen to have no $0$ weight edges, and $q_\ast$ is a $\op{Q}$-labeling of $\ut$.  However, we will often view the unstable vertices as branch labels, as we now explain.  


\begin{definition}\label{branchdef}  For any point $\Psi \in \mathsf{B}(\op{Q})(n)$ we define a set br$(\Psi)$ as follows.  If $\Psi$ is the base point then br($\Psi$)$:=\emptyset$.  If $\Psi$ is not the base point then choose a representative of $\Psi$ as a weighted, $\op{Q}$-labeled tree and define br($\Psi$) to be the branches of this tree which have non-zero weight.
\end{definition}

Lemma $\ref{description}$ implies that br($\Psi$) is independent of the choice of representative: all identifications preserve the set of branches which have non-zero weight.  
We view each branch in br($\Psi$) as labeled by its set of weighted edges and its set of $\op{Q}(1)$ labeled vertices.  These labels in turn specify a point in the bar construction of the monoid $\op{Q}(1)$ (see Appendix $\ref{sec:diagrams}$), by scaling the total branch weight to one.  This total branch weight is in turn remembered by the underlying stable weighted tree, and so: 

\begin{corollary}\label{blcor} Points $\Psi\in\mathsf{B}(\op{Q})(n)$ may be represented (non-uniquely) by a list $(\st, q_\ast,\mu)$ where $(\st, q_\ast)$ is the underlying weighted stable $\op{Q}$-labeled tree and $\mu$ is map of sets $br(\Psi)\to B(\op{Q}(1))$.
\end{corollary}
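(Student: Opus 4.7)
The plan is to start from any representative of $\Psi$ as a weighted $\op{Q}$-labeled tree $(\ut, q_\ast)$, chosen (as noted immediately before the statement) to have no edges of weight $0$, and to extract from it two pieces of data: the underlying stable weighted $\op{Q}$-labeled tree $(\st, q'_\ast)$, where $q'_\ast$ restricts $q_\ast$ to the stable vertices of $\ut$, together with a branch label in the monoidal bar construction $B(\op{Q}(1))$ for each branch of $\st$. The first piece is immediate from the definitions of underlying stable tree and stable weighted tree given in Section~\ref{background}.

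For the second piece, fix a branch $b$ of $\st$. By definition $b$ corresponds to a maximal chain in $\ut$ consisting of edges separated by unstable (arity-$1$) vertices labelled by elements of $\op{Q}(1)$, with both endpoints either at stable vertices of $\ut$ or at external edges of $\ut$. Because no edge of $\ut$ has weight $0$, every such chain has strictly positive total weight, so $b \in br(\Psi)$; in particular $br(\Psi)$ is the full set of branches of $\st$ for this representative. Rescaling the edge weights along the chain by the total branch weight produces a list of weights summing to $1$ interspersed with $\op{Q}(1)$-labels, which is precisely the data of a point in the simplicial bar construction $B(\op{Q}(1))$ of the monoid $\op{Q}(1)$ described in Appendix~\ref{sec:diagrams}. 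We set $\mu(b)$ to be this point.

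It remains to check compatibility with the equivalence relations of Lemma~\ref{description}. The three relations localize cleanly: relation (1) applied to a $0$-weight internal edge lying within a branch is identified with monoidal multiplication in $\op{Q}(1)$ under the simplicial presentation of $B(\op{Q}(1))$; relation (2), removing a unit-labelled arity-$1$ vertex, is a simplicial degeneracy in $B(\op{Q}(1))$; and relations applied across a branch boundary either leave the relevant chain unchanged or modify the stable tree itself, so they affect $(\st, q'_\ast)$ rather than $\mu$. The main obstacle is relation (3), which allows representatives in which a leaf or root edge has weight $0$ while its branch still has nonzero weight: here one must verify that collapsing such an edge via the augmentation $\op{Q}\to\op{I}$ corresponds to the augmentation-induced boundary identifications of $B(\op{Q}(1))$. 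This is a direct unpacking of the simplicial structure and completes the argument; the triple $(\st, q'_\ast, \mu)$ produced this way is the desired representation.
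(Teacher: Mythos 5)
Your proposal is correct and follows essentially the same approach the paper takes in the paragraph immediately preceding the corollary: read off, from a representative with no zero-weight edges, the underlying stable weighted $\op{Q}$-labeled tree, and for each branch interpret the chain of rescaled edge weights and $\op{Q}(1)$-vertex labels as a point of $B(\op{Q}(1))$ via the diagrammatics of Appendix~\ref{sec:diagrams}. One small remark: since the corollary only asserts that points \emph{may be represented} (i.e.\ the assignment from lists $(\st,q_\ast,\mu)$ to $\mathsf{B}(\op{Q})(n)$ is surjective), your final paragraph checking compatibility with the relations of Lemma~\ref{description} is not strictly needed once you have fixed a zero-weight-free representative; the non-uniqueness clause absorbs the ambiguity, and the paper accordingly omits this check.
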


\subsection{Bar Construction for $G$-operads.}\label{secbarg}

Let $G$ be a topological group.  The category of based $G$-spaces is a symmetric monoidal category under $\wedge$ with respect to the diagonal $G$ action.  By a $G$ (co)operad we mean a (co)operad in this symmetric monoidal category.  We say such a (co)operad is reduced if its underlying (co)operad is.

A key result of \cite{Ching} is the construction of a cooperad structure on $\mathsf{B}(\op{P})$, for $\op{P}$ a reduced operad.  It is easy to see that this cooperad structure is compatible with a $G$ action:

\begin{lemma}  Let $\op{P}$ be a reduced $G$-operad.  The diagonal action makes $\mathsf{B}(\op{P})$ a reduced $G$-cooperad.
\end{lemma}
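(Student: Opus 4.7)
The plan is that the claim is essentially formal, following from naturality of the bar construction together with the observation that Ching's cooperad decomposition only manipulates the combinatorics of weighted trees and leaves vertex labels alone. First I would observe that $\mathsf{B}$ is functorial in augmented pointed operads: a map $f\colon \op{P}\to \op{P}'$ sends a weighted $\op{P}$-labeled tree $(\ut,q_\ast)$ to $(\ut, f(q_\ast))$, and the equivalence relations of Lemma $\ref{description}$ are preserved because $f$ is compatible with operadic composition, the unit, and the augmentation. Continuity of each $\mathsf{B}(f)(n)$ is inherited from continuity of $f$ level-wise via the quotient presentation. Consequently a continuous action of $G$ on $\op{P}$ by operad automorphisms induces a continuous action of $G$ on each $\mathsf{B}(\op{P})(n)$ by $S_n$-equivariant based maps.

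Next I would check that Ching's cooperad structure maps are natural in operad maps of reduced operads. Recall these structure maps have the form $\mathsf{B}(\op{P})(n)\to \bigwedge_{v\in V(\st)}\mathsf{B}(\op{P})(\text{in}(v))$, one for each stable $n$-tree $\st$, and are defined by grouping the vertices of a representative $\op{P}$-labeled weighted tree according to which vertex of $\st$ they lie above, together with a rescaling of the surviving weights. This grouping depends only on the shape and weighting of the underlying tree, so precomposing with an operad automorphism merely relabels each factor identically. Applying this to each $g\in G$ shows the cooperad decomposition is equivariant with respect to the given $G$-action on $\mathsf{B}(\op{P})(n)$ and the smash product of the $G$-actions on each $\mathsf{B}(\op{P})(\text{in}(v))$. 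But the smash product of based $G$-actions \emph{is} by definition the diagonal action on the smash product, so this gives exactly the required cooperad structure in the monoidal category of based $G$-spaces.

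Finally I would note that reducedness passes to the bar construction: the hypothesis $\op{P}(1)=S^0$ forces $\mathsf{B}(\op{P})(1)=S^0$ (all chains of unary vertices collapse to the empty tree via identification (2) in Lemma $\ref{description}$), and Assumption $\ref{pointed}$ gives $\mathsf{B}(\op{P})(0)=\ast$; both carry the trivial based $G$-action as needed. The only place one could suspect a subtlety is in verifying that the numerical rescaling of weights along a branch collapse commutes with the $G$-action, but this is immediate because the rescaling is a function of the weight datum alone and never touches the $\op{P}$-labels. I do not expect a genuine obstacle here: the entire argument is naturality plus a bookkeeping check that every ingredient of Ching's construction is defined by an $\op{S}$-module map or a combinatorial operation, each of which is automatically $G$-equivariant when $\op{P}$ is a $G$-operad.
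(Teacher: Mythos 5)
Your proposal is correct and follows essentially the same path as the paper's proof: both rest on the single observation that Ching's degrafting maps manipulate only the tree shape and weights while the diagonal $G$-action touches only the vertex labels, so the two commute. The paper states this directly; you package it as naturality of $\mathsf{B}$ plus naturality of the structure maps, which is the same content in a slightly more formal wrapper, and you add the (correct, if routine) check that $\mathsf{B}(\op{P})$ remains reduced.
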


\begin{proof}  By the diagonal $G$ action we mean that given a point represented as $(\st,p_\ast)\in \mathsf{B}(\op{P})(n)$, where $\st$ is a weighted tree and $p_\ast$ is a $\op{P}$-labeling of $\st$, $g\in G$ acts by preserving the weighted tree and acting diagonally on the vertices; $p_v\mapsto gp_v$ for each vertex $v\in V(\st)$.  The assumption that $\op{P}$ is a $G$-operad implies that this action is independent of the choice of representative.

To complete the proof we briefly recall the cooperad structure given in Section 4.3 of \cite{Ching} (see Section \ref{cooperad} for more detail).  It is given by de-grafting trees and then manipulating the weights so that complete edge paths still have length $1$.  In particular, the cooperad degrafting does not change the set of vertices or the vertex labels.  Since the $G$ action only acts on these vertex labels, the diagonal $G$ action and the cooperad structure commute, as desired. \end{proof}

\begin{corollary}  For a reduced $G$-operad $\op{P}$, the $\mathbb{S}$-module $EG_+\wedge_G \mathsf{B}(\op{P})$ carries a natural cooperad structure.
\end{corollary}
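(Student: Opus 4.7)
The plan is to deduce this immediately from the previous lemma by exploiting the fact that the homotopy-orbits functor $EG_+ \wedge_G (-) \colon G\text{-Top}_\ast \to \text{Top}_\ast$ carries an oplax symmetric monoidal structure, and that oplax symmetric monoidal functors preserve cooperad structures.

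First, by the previous lemma, $\mathsf{B}(\op{P})$ is a reduced $G$-cooperad, meaning the cocomposition maps are $G$-equivariant with respect to the diagonal action. Thus to produce a cooperad structure on the levelwise Borel construction, it suffices to exhibit, for any based $G$-spaces $X$ and $Y$, a natural map
\begin{equation*}
\delta_{X,Y} \colon EG_+ \wedge_G (X \wedge Y) \longrightarrow (EG_+ \wedge_G X) \wedge (EG_+ \wedge_G Y)
\end{equation*}
and check that it is associative, unital and symmetric in the appropriate sense. The map $\delta_{X,Y}$ is induced by the diagonal $EG \to EG \times EG$, i.e.\ by $[e, x\wedge y] \mapsto [e,x] \wedge [e,y]$. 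This is well-defined because the diagonal of $EG$ is equivariant for the diagonal $G$-action on $EG\times EG$, and since $EG$ is a based $G$-space the map descends through the basepoint identifications.

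Next I would iterate $\delta$ to produce for each $n$-tuple of based $G$-spaces $X_1,\dots,X_n$ a natural map
\begin{equation*}
EG_+ \wedge_G (X_1 \wedge \cdots \wedge X_n) \longrightarrow (EG_+ \wedge_G X_1) \wedge \cdots \wedge (EG_+ \wedge_G X_n),
\end{equation*}
and then define the cocomposition on $\op{C}:=EG_+ \wedge_G \mathsf{B}(\op{P})$ as the composition of (i) applying $EG_+ \wedge_G (-)$ to the cocomposition of $\mathsf{B}(\op{P})$, followed by (ii) the iterated $\delta$ map above. Naturality of $\delta$ together with the $G$-equivariance of the cocomposition of $\mathsf{B}(\op{P})$ guarantees this is well-defined on the Borel quotient.

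The verifications remaining are all routine: coassociativity reduces to the coassociativity of $\mathsf{B}(\op{P})$ combined with the compatibility of iterated diagonals on $EG$, and counitality is immediate from the counit of $\mathsf{B}(\op{P})$ together with the fact that $EG_+ \wedge_G S^0 = BG_+$ maps naturally to a point. The main potential subtlety, and the only step I would treat with care, is the compatibility of $\delta$ with associativity of $\wedge$; but this boils down to the cocommutativity of the diagonal $EG \to EG\times EG\times EG$, which is manifest. Thus $EG_+ \wedge_G \mathsf{B}(\op{P})$ inherits the claimed cooperad structure, functorially in the reduced $G$-operad $\op{P}$.
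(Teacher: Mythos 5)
Your argument is correct and is essentially identical to the paper's: both prove the corollary by observing that $EG_+\wedge_G(-)$ is op-lax (symmetric) monoidal via the diagonal $EG\to EG\times EG$, and that applying an op-lax monoidal functor levelwise preserves cooperad structures, then invoking the preceding lemma that $\mathsf{B}(\op{P})$ is a $G$-cooperad. You merely unwind the op-lax structure map and its iterates more explicitly than the paper does.
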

\begin{proof}
Consider the functor $EG_+\wedge_G-$ from based $G$-spaces to based spaces.  This functor is op-lax monoidal via the diagonal $EG\to EG\times EG$.  Since applying op-lax monoidal functors level-wise preserves cooperads, the above lemma functorially induces a cooperad structure on $EG_+\wedge_G \mathsf{B}(\op{P})$.
\end{proof}

\subsection{Semi-direct products}\label{secsdp}  The semi-direct product construction \cite{Marklsdp,SW} is a functor from unbased $G$-operads to unbased operads.  It is denoted $-\rtimes G$.  Explicitly, for an unbased $G$-operad $\op{O}$ let $(\op{O}\rtimes G)(n) := \op{O}(n)\times G^n$ with operad structure via:
\begin{equation}\label{sdp} \ \ 
(a;g_1\cdc g_n)\circ_i (b;h_1\cdc h_m):=(a\circ_i g_ib; g_1\cdc g_{i-1},g_ih_1\cdc g_ih_m, g_{i+1}\cdc g_m).
\end{equation}

There is an evident version of $-\rtimes G$ which applies to operads in based spaces.  Namely for a based $G$-operad $\op{P}$ we define $(\op{P}\rtimes G)(n) := \op{P}(n)\wedge (G_+)^{\wedge n}\cong \op{P}(n)\times G^n / \ast\times G^n $ along with the operad structure induced on such quotients by line $\ref{sdp}$.  These two constructions commute with adjoining a base point: for an unbased $G$-operad $\op{O}$ we have $(\op{O}\rtimes G)_+ \cong (\op{O}_+)\rtimes G$.

To conclude this section, consider taking the bar construction of a semi-direct product, $\mathsf{B}(\op{P}\rtimes G)$ when $\op{P}$ is a reduced operad.\footnote{using the obvious augmentation on $\op{P}\rtimes G$ which sends $G_+\to S^0$ by sending $G$ to the non-base point} If we pick a graphical representative of a non-base point, stable vertices come with a label of the form $(p, g_1\cdc g_r)\in \op{P}(r)\times G^{\times r}$.  But we can always choose a representative of the form $(p,e\cdc e)$ by decomposing the original factors of $G$ to lie above on an unstable vertex connected by a zero weight.  Thus, we can (and will) represent $\mathsf{B}(\op{P}\rtimes G)$ via trees whose stable vertices are labeled by $\op{P}\subset \op{P}\times G^{\times \ast}$.  Of course such a representation is not necessarily unique, it is subject to the semi-direct product identifications (written from now on as ``SDP''), which can be pictured as in the following figure.

\begin{figure}[h]
	\centering
	\includegraphics[scale=.35]{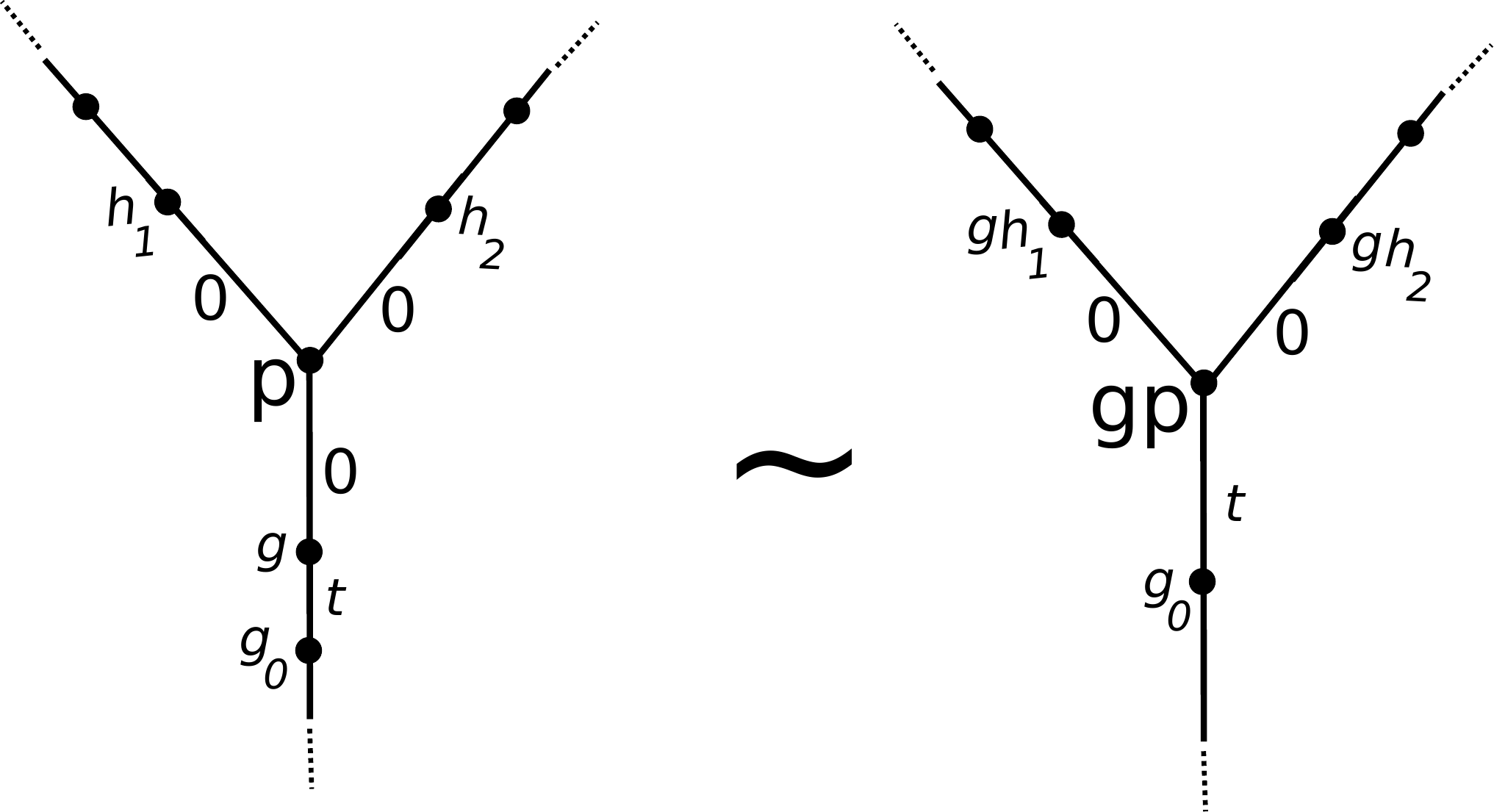}
	\label{fig:sdp}
\end{figure}

\section{Cooperad structures.}\label{cooperad}

In this section we generalize the bar construction of \cite{Ching} from reduced to certain non-reduced topological operads.  This will allows us to define a cooperad structure on the bar construction of $\op{P}\rtimes G$ which will be the subject of our main theorem in the next section.

\subsection{Recollection of the cooperad structure on reduced operads.}
We began by briefly recalling the cooperad structure on $\mathsf{B}(\op{P})$ for $\op{P}$ a reduced operad due to \cite{Ching}.  Using the definition of a cooperad as an operad in the opposite category, and in turn using the definition of an operad as a monoid in $\mathbb{S}$et-Mod (c.f.\ Subsection $\ref{barsec}$), a cooperad structure may be specified by a degrafting structure map,
\begin{equation*}
\mathsf{B}(\op{P})(A\cup_aB)\to \mathsf{B}(\op{P})(A)\wedge \mathsf{B}(\op{P})(B)
\end{equation*}
for each pair of disjoint nonempty finite sets $A,B$ and each element $a\in A$.  Here $A\cup_aB:= (A\setminus \{a\}) \cup B$.  Since the source of a structure map is a quotient of a coproduct, we may define it by fixing a stable tree $\st$ labeled by $A\cup_aB$ and defining maps
\begin{equation}\label{psm}
\op{P}(\st)\wedge \bar{w}(\st)\to \mathsf{B}(\op{P})(A)\wedge \mathsf{B}(\op{P})(B)
\end{equation}
which respect the identifications in the quotient.  This map is defined in two cases.  Either $\st$ can not be formed by grafting a $B$-labeled tree onto an $A$-labeled tree at leaf $a$ of the latter, in which case the map in line $\ref{psm}$ is defined to map to the base point.  The other case is that $\st$ can be formed by grafting a
$B$-labeled tree to an $A$-labeled tree at leaf $a$.  In this case such a grafting is unique and we may write $\st= \st_A\circ_a\st_B$ where $\st_A$ (resp.\ $\st_B$) is an $A$-labeled (resp.\ $B$-labeled) tree.  

In order to define the map in line $\ref{psm}$ it remains to say how a $\op{P}$-labeling and a weighting of $\st$ induce a $\op{P}$-labeling and a weighting of both $\st_A$ and $\st_B$.  For the $\op{P}$-labeling notice $V(\st)=V(\st_A)\sqcup V(\st_B)$ and so a $\op{P}$-labeling of $\st$ determines a canonical $\op{P}$-labeling of both $\st_A$ and $\st_B$.  Given a weighting of $\st$ we weight $\st_A$ by keeping the edge weights induced by $\st$ except at the leaf $a$, which has a unique edge weight such that the sum of root to leaf paths is $1$.  We then get a weighting of $\st_B$ by scaling all of its edges proportionally so that the sum of root to leaf paths is $1$.

It remains to check that the structure maps defined in this way are well defined and co-associative.  For this we refer to section 4.3 of \cite{Ching}.

\subsection{Cooperad structure on non-reduced operads.}  

\begin{definition}  We say an augmented topological operad $\op{Q}\to \op{I}$ is strongly augmented if the induced map $\op{Q}(1)\to S^0$ in arity $1$ sends non-base points to the non-base point.
\end{definition}

In particular if $\op{P}$ is a reduced operad, then $\op{P}\rtimes G$ is strongly augmented.  In this section we fix $\op{Q}$ to be strongly augmented and we will give a cooperad structure to $\mathsf{B}(\op{Q})$.

As above, a putative cooperad structure on $\mathsf{B}(\op{Q})$ may be defined by maps
\begin{equation}\label{psm2}
\op{Q}(\ut)\wedge \bar{w}(\ut)\to \mathsf{B}(\op{Q})(A)\wedge \mathsf{B}(\op{Q})(B)
\end{equation}
for each pair of disjoint finite nonempty sets $A,B$ and each element $a\in A$.  Let $\st$ be the underlying stable tree of $\ut$; note this is still a $A\cup_aB$ labeled tree.  We again consider two cases.  Either $\st$ can not be formed by grafting an $A$-labeled tree to a $B$-labeled tree at leaf $a$ of the former, in which case the map in line $\ref{psm2}$ is defined to map to the base point.  The other case is that $\st$ can be formed by grafting a $B$-labeled tree onto an $A$-labeled tree at leaf $a$ and we may write $\st= \st_A\circ_a\st_B$ where $\st_A$ (resp.\ $\st_B$) is a $A$-labeled (resp.\ $B$-labeled) tree.

Recall (Corollary $\ref{blcor}$) that we may depict points $\Psi\in\mathsf{B}(\op{Q})(n)$ as lists $(\st, q_\ast; \mu)$ where $(\st, q_\ast)$ is a stable tree labeled by $\op{Q}$ and $\mu$ is a map of sets $br(\Psi)\to B(\op{Q}(1))$ (called a marking of the set of branches).  Given $\Psi=(\st, q_\ast; \mu)\in \op{Q}(\ut)\wedge \bar{w}(\ut)$ we will define the structure map $\Psi\mapsto \Psi_A\wedge \Psi_B$ with $\Psi_A=(\st_A, q_\ast; \mu_A)$ and $\Psi_B=(\st_B, q_\ast; \mu_B)$.  The degrafting of stable weighted trees given above defines weightings on $\st_A$ and $\st_B$ and these come with $\op{Q}$-labelings induced as above from the bijection $V(\st)=V(\st_A)\sqcup V(\st_B)$.  This specifies the stable $\op{Q}$-labeled trees $(\st_A,q_\ast)\in \mathsf{B}(\op{Q})(A)$ and $(\st_B,q_\ast)\in \mathsf{B}(\op{Q})(B)$, and it remains to define $\mu_A$ and $\mu_B$.

There is an obvious surjective map $\text{br}(\Psi_A)\sqcup\text{br}(\Psi_B)\to\text{br}(\Psi)$ which identifies the root of $\st_B$ with the leaf labeled by $a$ of $\st_A$ and which is otherwise the natural bijective correspondence.  Composing this map with $\mu$ associates a point in $B(\op{Q}(1))$ to each point in $\text{br}(\Psi_A)\sqcup\text{br}(\Psi_B)$.  Restricting this correspondence to $\text{br}(\Psi_A)$ and $\text{br}(\Psi_B)$  defines $\mu_A$ and $\mu_B$ respectively.  This defines our putative cooperad structure.  An informal description of this cooperad structure can be given by saying we degraft the underlying stable trees, as per \cite{Ching}, and we duplicate the $\op{Q}(1)$-marking at the degrafted branch.  See Figure $\ref{fig:coop}$.

\begin{figure}[h]		
	\centering 	
	\includegraphics[scale=.4]{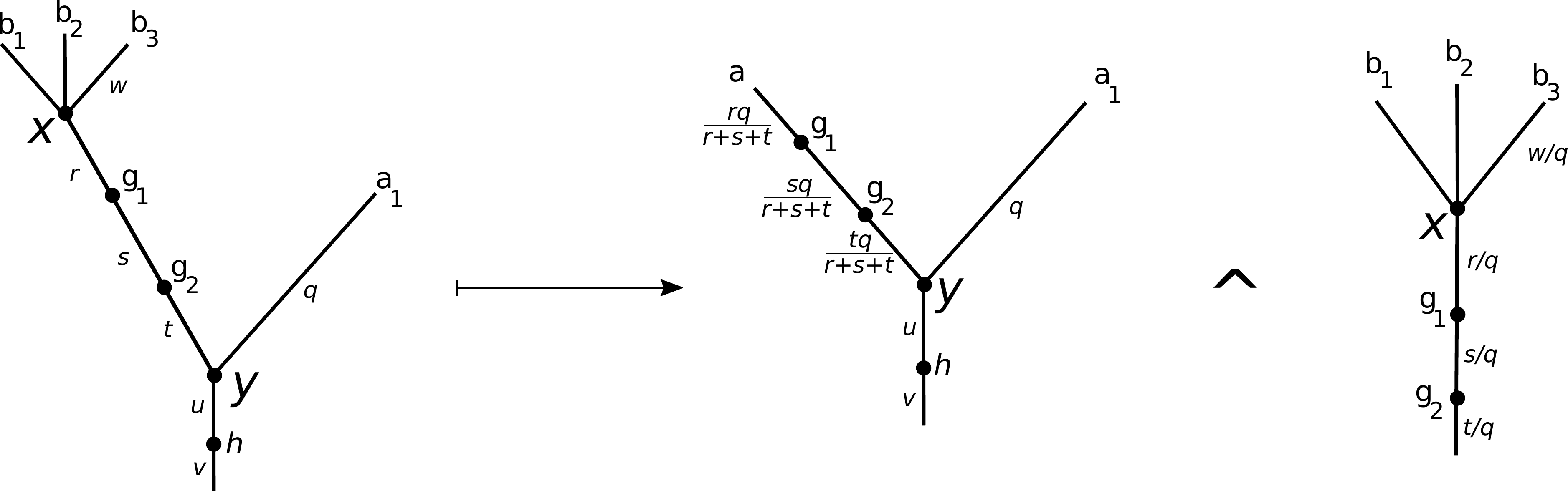}
	\caption{Cooperadic decomposition at $a$ for $A=\{a,a_1\}$, $B=\{b_1,b_2,b_3\}$.  Here $h,g_i\in \op{Q}(1)$ and  $q,r,s,t,u,v,w$ are weights. Notice that if $q$ or $r+s+t$ is $0$, the target of this decomposition is the base point.}
	\label{fig:coop}
\end{figure}

\begin{proposition}\label{coopmap}  The above structure maps induce the structure of a cooperad on $\mathsf{B}(\op{Q})$.
\end{proposition}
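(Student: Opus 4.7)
The plan is to leverage the representation of points in $\mathsf{B}(\op{Q})(n)$ from Corollary \ref{blcor}, which separates a point $\Psi$ into an underlying stable weighted $\op{Q}$-labeled tree $(\st, q_\ast)$ together with a branch marking $\mu : \mathrm{br}(\Psi) \to B(\op{Q}(1))$. Under this representation the proposed structure map acts by applying the degrafting of \cite{Ching} to the stable part and restricting $\mu$ branchwise, duplicating its value on the cut branch onto both $\st_A$ and $\st_B$. This decomposition reduces the verification of the cooperad axioms to \cite{Ching}'s argument on the stable tree plus an essentially formal observation about restriction of a set-theoretic function.

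First I would verify well-definedness by checking the three identifications from Lemma \ref{description}. Relations (2) and (3) concern data within a single branch---unit labels on unary vertices and zero-weight root or leaf edges on branches of positive total weight---and these are precisely the identifications already encoded in the bar construction $B(\op{Q}(1))$ in which $\mu$ takes values, so they are respected automatically by the branchwise restriction of $\mu$. Relation (1), a zero-weight internal edge of $\ut$, splits into two subcases: a zero weight internal to a branch (again handled inside $B(\op{Q}(1))$), and zero total weight on an internal branch of $\st$. The latter reduces to \cite{Ching}'s analysis; the only case requiring separate verification is when the cut branch itself has total weight zero, in which case the rescaling of $\st_B$ gives its root edge weight zero and so sends $\st_B$ to the base point, consistently with the fact that the composed stable tree no longer admits the split at $a$.

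Next I would verify co-associativity: for disjoint triples $A, B, C$ with cuts at $a \in A$ and $b \in B$, the two composite structure maps must agree as maps
\begin{equation*}
\mathsf{B}(\op{Q})(A \cup_a B \cup_b C) \to \mathsf{B}(\op{Q})(A) \wedge \mathsf{B}(\op{Q})(B) \wedge \mathsf{B}(\op{Q})(C).
\end{equation*}
At the stable level this is precisely \cite{Ching}'s co-associativity argument. At the marking level, both iterations produce markings on the branches of $\st_A, \st_B, \st_C$ by composing the same natural surjective correspondences of branch sets, so they agree. Symmetric group equivariance is immediate since relabeling leaves commutes with cutting at a distinguished leaf, and continuity follows from continuity of the stable degrafting together with continuity of the weight rescaling, whose denominator is bounded away from zero on the non-base-point strata.

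The hardest part will be the careful bookkeeping of boundary behavior---ensuring that the structure map lands at the base point in exactly the right cases (when $\st$ does not admit a split at $a$ separating $A$ from $B$, when the cut branch has zero total weight, or when renormalization causes a surviving root or leaf edge to acquire weight zero), and matching each such case against the quotient relations of Lemma \ref{description}. These are mechanical rather than conceptual difficulties; once handled, the $(\st, q_\ast; \mu)$ framework together with \cite{Ching}'s argument packages the rest of the proof.
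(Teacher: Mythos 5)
The key gap is in handling identification (1) of Lemma \ref{description} when a zero-weight internal edge of $\ut$ joins a stable vertex to an unstable vertex inside a branch of nonzero total weight. Your two subcases---zero weight between two unstable vertices (handled in $B(\op{Q}(1))$) and zero total weight on an internal branch (handled by Ching)---do not cover this mixed case, and it is not reducible to either: contracting such an edge changes the stable vertex label $q_\ast$ by operadic composition, not just the branch marking $\mu$, so it is not a face map of $B(\op{Q}(1))$; and the branch weight is positive, so Ching's analysis of zero-weight edges of $\st$ is silent.

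This case is exactly where the strongly augmented hypothesis enters, and you never invoke it. Suppose the zero-weight edge sits at one end of the cut branch, adjacent to a stable vertex $v$, and compare ``contract then degraft'' with ``degraft then contract.'' After degrafting, the cut branch is duplicated onto both factors. In the factor containing $v$, the unstable vertex is still adjacent to $v$ across a zero-weight internal edge, and identification (1) composes its label into $q_v$ as expected. In the other factor, that same unstable vertex now sits at a zero-weight \emph{external} (root or leaf) edge, so identification (3) applies instead: the augmentation $\op{Q}(1)\to S^0$ is applied to its label. For the two outcomes to agree, the augmentation must send the (non-base-point) label to the non-base point of $S^0$, so that the vertex is simply deleted---which is precisely what strong augmentedness guarantees. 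Without that hypothesis the structure map is not well defined. By packaging well-definedness on markings as a purely $B(\op{Q}(1))$-internal matter and classifying the remaining boundary cases as ``mechanical,'' your proposal misses both this case and the role of the hypothesis under which the proposition is stated.
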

\begin{proof}
	We first check that the maps defined in line $\ref{psm2}$ are compatible with the identifications in the bar construction.  The argument follows as in p.871 of \cite{Ching}.  In particular we remark that if an internal branch of a representation of a point in the source has weight $0$, then degrafting before contracting maps to the base point.  On the other hand if we contract this edge, its underlying stable tree no longer is a grafting of a $B$ tree onto an $A$ tree and so maps to the base point as well by definition.
	
	The interesting new cases not covered in loc.cit.\ is if there is an edge of weight $0$ adjacent to one stable and one unstable vertex.  These correspond to the cases $r=0$ or $t=0$ in Figure $\ref{fig:coop}$ and are easily checked.  For example if $r=0$, degrafting before contacting employs (3) of Lemma $\ref{description}$ to remove the unstable vertex labeled by $g_1$ on the left hand factor while using operadic composition on the right hand factor.  This is clearly the same as first contracting and then degrafting.  Notice here we used the strongly augmented hypothesis to ensure that if $h,g_i$ are non-base points, then they are mapped to the identity by the augmentation, which has the effect of removing these vertices in the image of the decomposition.
	
	To conclude we observe the decomposition maps are coassociative.  This follows from the coassociativity in the reduced case after \cite{Ching}.  Indeed using this we need only compare the branch markings of iterated degraftings and these are given by the diagonal which is clearly coassociative. 	\end{proof}

\subsection{Relation between the reduced and non-reduced cases} \label{comparisonsec}

There is an obvious inclusion functor from the category of reduced operads to the category of (strongly augmented) operads which commutes with $\mathsf{B}$.  To distinguish between these categories we will call this functor $\iota$ in this section.  This functor has both a left and a right adjoint which we now describe.

For an augmented operad $\op{Q}$ we define a reduced operad $R(\op{Q})$ by 
\begin{equation*}
R(\op{Q})(n) = \begin{cases}  
S^0 & \text{ if } n=1 \\
\op{Q}(n) & \text{ if } n\geq 2
\end{cases}
\end{equation*}
with operad structure maps induced by either those in $\op{Q}$ when the arities are $\geq 2$ or trivial structure maps involving arity $1$.  We furthermore define $R$ of a morphism of operads to be the restricted morphism.  Then $R$ is a functor and the fact that $R$ is a right adjoint to $\iota$ may be easily verified.  Indeed $R\iota$ is the identity, while the counit of the adjunction $\iota R\Rightarrow id_{ops} $ is via the unit map $S^0=\iota R(\op{Q})(1)\to\op{Q}(1)$ and the identity in all other arrows.

For an augmented operad $\op{Q}$ we define the reduced operad $L(\op{Q})$ to be the pushout of 
\begin{equation*}
\op{Q}\leftarrow \op{Q}(1)\to \op{I}
\end{equation*}
in the category of operads.  Recall (subsection $\ref{barsec}$) that $\op{I}$ is the trivial operad having $S^0$ in arity $1$ and a base point in all other arities.  Likewise $\op{Q}(1)$ refers to the operad having $\op{Q}(1)$ in arity $1$ and a base point in all other arities.  The arrow 
$\op{Q}(1)\to \op{I}$ comes from the augmentation.  The fact that $L(\op{Q})$ is a reduced operad can be seen, for example, by the fact that restriction to operads concentrated in arity $1$ is a left adjoint of inclusion (it's also a right adjoint of inclusion) and so preserves pushouts.  Extending $L$ to a functor we observe that $L$ is a left adjoint of $\iota$.  Indeed $L\iota$ is the identity, while the unit of the adjunction $id_{ops} \Rightarrow \iota L $ is via the cocone map $\op{Q}\to L(\op{Q}) = \iota L(\op{Q})$.

We will also need cooperadic analogs of the triple of adjoint functors $(L,\iota, R)$.  For this we follow section 5 of \cite{Ching} and continue with the definition of a cooperad as an operad in Top$_\ast^{op}$ and let the cobar construction $\Omega(\op{C}):=\mathsf{B}(\op{C}^{op})^{op}$. We then define a triple of adjoint functors $(\tilde{R},\tilde{\iota},\tilde{L})$ between coaugmented cooperads and reduced cooperads as the levelwise opposites of $(L,\iota, R)$. That is, $\tilde{\iota}$ is inclusion of reduced cooperads into cooperads, $\tilde{R}$ is restriction to trivial arity $1$ and $\tilde{L}(\op{C})$ is the pullback of 
$\op{C}\to \op{C}(1)\leftarrow \op{I}$.

We conclude this section by observing that $L$ and $R$ intertwine the bar-cobar construction for non-reduced operads:

\begin{proposition}\label{redint}  There are isomorphisms of functors $\mathsf{B}R\cong \tilde{L}\mathsf{B}$ and $\Omega \tilde{R}\cong L\Omega$.
\end{proposition}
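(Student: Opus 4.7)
The plan is to identify each side of the first isomorphism with an explicit sub-cooperad of $\mathsf{B}(\op{Q})$ using the combinatorial description of Lemma~\ref{description} and the cooperad structure of Proposition~\ref{coopmap}; the second isomorphism will then follow by a dual pushout argument using the identity $\Omega(\op{C})=\mathsf{B}(\op{C}^{op})^{op}$.

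Because $R(\op{Q})(1)=S^0$, Lemma~\ref{description} describes $\mathsf{B}R(\op{Q})$ as the $\mathbb{S}$-module of stable weighted $\op{Q}$-labeled trees with no branch markings (in arity $n\geq 2$) and $S^0$ (in arity $1$). The counit $\iota R(\op{Q})\to\op{Q}$ of the adjunction $\iota\dashv R$ induces a natural cooperad map $\mathsf{B}R(\op{Q})\to\mathsf{B}(\op{Q})$ whose image is precisely this no-marking sub-$\mathbb{S}$-module. The image is closed under the cooperad decomposition of Proposition~\ref{coopmap}, since degrafting a no-marking tree produces no-marking subtrees (the duplication operation duplicates the trivial marking). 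Together with the canonical augmentation $\mathsf{B}R(\op{Q})\to\op{I}$, this furnishes a cone over the defining diagram $\mathsf{B}(\op{Q})\to\mathsf{B}(\op{Q})(1)\leftarrow\op{I}$ of $\tilde{L}\mathsf{B}(\op{Q})$, so by the pullback property there is a natural cooperad map $\phi:\mathsf{B}R(\op{Q})\to\tilde{L}\mathsf{B}(\op{Q})$.

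To show $\phi$ is an isomorphism I would characterize $\tilde{L}\mathsf{B}(\op{Q})$ as the maximal reduced sub-cooperad of $\mathsf{B}(\op{Q})$. Any such $\op{S}$ has $\op{S}(1)=S^0$, which by the arity-$1$ decomposition of Proposition~\ref{coopmap} forces every external branch marking on a point of $\op{S}$ to be trivial (otherwise the extracted arity-$1$ piece fails to land in $\op{S}(1)$). An internal branch becomes external after a suitable $|A|,|B|\geq 2$ decomposition, and the marking-duplication rule of Proposition~\ref{coopmap} places the original internal marking on the new external branches of both resulting subtrees. Iterating on the number of internal branches forces every marking on a point of $\op{S}$ to vanish, so $\op{S}\subseteq\mathsf{B}R(\op{Q})$, with equality since $\mathsf{B}R(\op{Q})$ is itself a reduced sub-cooperad. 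The second isomorphism $\Omega\tilde{R}\cong L\Omega$ follows by the dual combinatorial argument: the unit $\op{C}\to\tilde{\iota}\tilde{R}(\op{C})$ and the pushout property of $L\Omega(\op{C})$ produce a natural map $L\Omega(\op{C})\to\Omega_{\mathrm{red}}\tilde{R}(\op{C})$, and the same inductive analysis (dualized to the cobar construction) identifies $L\Omega(\op{C})$ as the minimal reduced quotient of $\Omega(\op{C})$, realized by $\Omega_{\mathrm{red}}\tilde{R}(\op{C})$.

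The main obstacle is the inductive step in which internal branch markings are shown to vanish in any reduced sub-cooperad of $\mathsf{B}(\op{Q})$. Everything else is formal, but this step requires careful bookkeeping of how the decomposition of Proposition~\ref{coopmap} duplicates a marking from an internal branch across the two subtrees produced by degrafting.
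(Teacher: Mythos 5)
Your proposal follows essentially the same approach as the paper: both hinge on the observation that iterated cooperadic degrafting (as in Proposition~\ref{coopmap}) isolates branch markings as arity-$1$ factors, and the constraint that arity $1$ is $\op{I}(1)=S^0$ then forces those markings to be trivial. The only cosmetic difference is that you recast the pullback universal property of $\tilde{L}\mathsf{B}(\op{Q})$ as ``the maximal reduced sub-cooperad,'' while the paper verifies the universal property of $\mathsf{B}(R\op{Q})$ directly against an arbitrary cone $\op{C}\to\mathsf{B}(\op{Q})$ (handling non-injective cooperad maps by realizing the decomposition in $\op{C}$ before applying $\eta$, which also dispenses with your worry about the inductive bookkeeping).
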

\begin{proof}  We will sketch the proof for the first statement with the second statement following similarly.  Let $\op{Q}$ be an operad and it suffices to prove that $\mathsf{B}(R\op{Q})$ is a pullback of $\mathsf{B}(\op{Q})\to \mathsf{B}(\op{Q})(1) \leftarrow \op{I}$.  First note that we have cooperad maps $\mathsf{B}(\op{Q})\leftarrow \mathsf{B}(R\op{Q}) \rightarrow \op{I}$ by taking $\mathsf{B}$ of the inclusion $R\op{Q}\to \op{Q}$ and the counit of $\mathsf{B}(R\op{Q})$.  These serve as the cone maps for the pullback, and it suffices to establish universality.
	
	Fix a cooperad $\op{C}$ along with a cone from $\op{C}$ to the diagram $\mathsf{B}(\op{Q})\to \mathsf{B}(\op{Q})(1) \leftarrow \op{I}$.  Denote the cone map $\op{C}\to \mathsf{B}(\op{Q})$ by $\eta$.  Any $\eta(c)\in \mathsf{B}(\op{Q})(n)$, may be decomposed via the cooperadic structure maps to a list of factors with no internal branches.  This list may in turn be decomposed so that the labels of branches of $\eta(c)$ appear as factors in the decomposition.  Since $\eta$ is a cooperad map, we may realize this decomposition in $\op{C}$ before applying $\eta$.  Since $\eta$ is part of a cone over the diagram, the arity $1$ factors in the decomposition must then live in the image of $\mathsf{B}(R\op{Q})(1)=\op{I}(1)\to\mathsf{B}(\op{Q})(1)$.  Thus $\eta(c)$ may be represented by a stable $\op{Q}$-labeled tree and so $\eta(c)$ is in the image of the injective map $\mathsf{B}(R\op{Q})(n)\to \mathsf{B}(\op{Q})(n)$.  This provides us with the lift $\op{C}\to \mathsf{B}(R\op{Q})$ from which the claim follows.
\end{proof}

\begin{remark}  This proposition is a second example (along with Theorem A) of an intertwining statement, i.e.\ it relates two functors by intertwining them with (bar) duality.  A systematic treatment of intertwining theorems for algebraic operads was given in \cite{Ward6ops} in the language of the six functors formalism.  Those results furnished intuition behind the results of this paper, and it would furthermore be desirable to work out the six functor formalism in the homotopy categories of generalized operads in spaces and spectra for which the results of this paper become a formal consequence.
\end{remark}

\section{Proof of the main theorem}\label{mainthmsec}
In this section we will prove our main theorem:

\begin{nonumbertheoremA} Let $\op{P}$ be a reduced $G$-operad in based topological spaces.  Then $\mathsf{B}(\op{P}\rtimes G)$ carries the structure of a cooperad for which there is a weak equivalence of cooperads:
	\begin{equation*}
	EG_+\wedge_G \mathsf{B}(\op{P}) \stackrel{\sim}\longrightarrow \mathsf{B}(\op{P}\rtimes G).
	\end{equation*}
\end{nonumbertheoremA}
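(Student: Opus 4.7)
The plan is to prove Theorem A in three steps: define the map, verify it is a level-wise homotopy equivalence, and show it intertwines the two cooperad structures.  Observe first that since $\op{P}$ is reduced, $(\op{P}\rtimes G)(1) = G_+$ and the augmentation $G_+\to S^0$ sends every non-basepoint of $G$ to the non-basepoint, so $\op{P}\rtimes G$ is strongly augmented and Proposition~\ref{coopmap} endows $\mathsf{B}(\op{P}\rtimes G)$ with a cooperad structure.  To define $\phi_n$, use the representation of points in $\mathsf{B}(\op{P}\rtimes G)(n)$ as triples $(\st,p_\ast;\mu)$ with $\mu\colon \mathrm{br}(\st)\to B(G_+)\cong BG_+$ from Corollary~\ref{blcor}, and work in the simplicial model $EG = |G^{\bullet+1}|$.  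On a typical simplex set
\[
\phi_n\bigl([(g_0,\ldots,g_k),(\st,p_\ast)]\bigr) := \bigl(\st,\; g_0^{-1} p_\ast;\; (g_0^{-1}g_1,\ldots,g_{k-1}^{-1}g_k)\text{ as marking on every branch}\bigr),
\]
interpreting the vertex labels $g_0^{-1}p_\ast$ in $\mathsf{B}(\op{P}\rtimes G)$ via the inclusion $p\mapsto(p,e_G,\ldots,e_G)$ and the marking as a simplex in $BG_+$ with weights inherited from the $EG$-simplex and rescaled to each branch's total weight.  The formula is manifestly $G$-invariant (acting by $g$ on both coordinates leaves it unchanged), continuous, and $S_n$-equivariant.

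The second step, showing $\phi_n$ is a level-wise homotopy equivalence, is the core technical content.  Decompose both sides by the shape of the underlying stable $n$-tree $\st$: the source stratum for $\st$ is $EG_+\wedge_G(\op{P}(\st)\wedge \bar{w}(\st))$, whereas the target stratum is the image of $\op{P}(\st)\wedge \bar{w}(\st)\wedge (BG_+)^{\wedge \mathrm{br}(\st)}$ modulo the $G^{V(\st)}$-action implementing SDP (Section~\ref{secsdp}) at each stable vertex; this action simultaneously acts by $g_v$ on $p_v$ and appends/prepends $g_v^{\pm 1}$ to the $BG_+$-markings on the branches adjacent to $v$.  Using the principal $G$-bundle $EG\to BG$ together with a gauge-fixing argument via the SDP moves, one identifies the target stratum as homotopy equivalent to $EG_+\wedge_G(\op{P}(\st)\wedge \bar{w}(\st))$, with the equivalence realized by $\phi_n$.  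Assembling these stratum-wise equivalences, compatibly with the zero-weight edge collapses of Lemma~\ref{description} that identify different strata, yields the global level-wise equivalence.

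Finally, the source cooperad structure is induced by applying the op-lax monoidal functor $EG_+\wedge_G(-)$ to Ching's cooperad on $\mathsf{B}(\op{P})$, so degrafting combines Ching's tree degrafting with the diagonal $EG\to EG\times EG$; the target cooperad, by Proposition~\ref{coopmap}, degrafts the underlying stable tree and duplicates the $B(G_+)$-marking at the degrafted branch.  Under $\phi_n$ the diagonal $e\mapsto(e,e)$ matches precisely the marking duplication, since $\phi_n$ installs the same $BG_+$-marking on every branch, and the remaining checks reduce to Ching's reduced case.  The main obstacle is the stratum-wise equivalence in the second step: the $G^{V(\st)}$-action is non-free on the $\op{P}$-labels and entangles multiple $BG_+$-markings simultaneously, so producing the equivalence with the Borel construction requires a delicate gauge-fixing using the contractibility of $EG$ and the prepend/append SDP moves on internal markings.
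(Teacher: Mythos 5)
Your outline follows the same architecture as the paper (define a map, show it is a level-wise equivalence, match cooperad structures, using the combinatorial tree description and the $B(\op{Q}(1))$ branch markings), but it has two genuine gaps.

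First, the formula for $\phi_n$, which places the common $BG$-projection $(g_0^{-1}g_1,\dots,g_{k-1}^{-1}g_k)$ ``as marking on every branch,'' is almost certainly not continuous without a correction. The paper's map $\sigma$ places $\zeta\in EG$ on the \emph{root} branch only, and on each non-root branch places $\gamma(\zeta)\in B(G,G,G)$, where $\gamma$ prepends the inverse of the total product of the vertex labels. This $\gamma$-correction has the crucial feature that the product of the unstable labels along any branch in the image of $\sigma$ is the identity; this is exactly what makes the face collapses in the source (where an $EG$-edge weight goes to $0$) agree with the resulting zero-weight contractions in the target and, incidentally, is what makes $\pi\circ\sigma=\mathrm{id}$ literally true. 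With your formula, take the simplex weight attached to $g_0$ to $0$: in the source the face map replaces $g_0^{-1}p_\ast$ with $g_1^{-1}p_\ast$ and shortens the marking, while in the target the zero-weight edges at the root side of each branch compose $g_0^{-1}g_1$ \emph{into every slot} of the stable vertex below, yielding vertex labels $(g_0^{-1}p_v;\,g_0^{-1}g_1,\dots,g_0^{-1}g_1)$. This does not visibly coincide with the representative produced by applying $\phi_n$ after the face map, and the semi-direct product identifications do not obviously close the gap. You have neither checked this nor built in the device that would make it work.

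Second, and more fundamentally, the core Step 2 is not a proof but a promissory note. Decomposing both sides by the shape of the underlying stable tree and arguing ``stratum-wise'' cannot by itself produce a global homotopy equivalence: the strata are not open or closed pieces of a CW decomposition but are glued by zero-weight collapse maps, and a collection of equivalences of strata does not assemble to an equivalence of the total space without substantial additional input (one would need, e.g., a cofibrancy/gluing argument compatible with all the face identifications simultaneously). You yourself flag the ``delicate gauge-fixing using the contractibility of $EG$'' as ``the main obstacle'' and do not carry it out. This is precisely the content that the paper supplies by constructing an explicit continuous retraction $\pi$ (sending $(\st,p_\ast,\beta)$ to $[\beta_e(R)\wedge(\st,g_{v_\ast}p_\ast)]$) and then an explicit homotopy $H$ that contracts each branch's $\widetilde{EG}$-marking toward $\gamma(\beta_e(R))$ at a rate controlled by the edge weights. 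That construction is where the real work lies, and your sketch replaces it with the claim that one can do it. You also do not address the arities $n=0,1$, which have no stable vertices and must be handled separately (the paper identifies $\mathsf{B}(\op{P}\rtimes G)(1)\cong BG_+$ directly).
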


Since weak and homotopy equivalences of $\mathbb{S}$-modules are given level-wise, it is enough to construct such a cooperad map which is a homotopy equivalence in each arity.
We will first construct this map and its homotopy inverse in an arbitrary arity $n$ and conclude by comparing the cooperad structures.

To construct the homotopy we will subdivide the proof in to cases based upon whether an $n$-tree has a stable vertex or not; in other words the case $n<2$ will be handled separately. 

{\bf The Case }$n=0$.  Both spaces are a point due to Assumption $\ref{pointed}$.

{\bf The Case }$n=1$.  Reduced implies $\op{P}(1)=S^0$ and so $\mathsf{B}(\op{P})(1)=\op{P}(1)\wedge (\ast/\emptyset) = S^0\wedge S^0=S^0$ regarded as a trivial $G$ space.  On the other hand, $\op{P}\rtimes G(1)=G_+$ and using the description of $\mathsf{B}$ above we know that
\begin{equation*}
\mathsf{B}(\op{P}\rtimes G)(1)=\left(\ds\bigvee_{1\text{-trees } \ut} (G_+)^{\wedge |\text{Ed(T)}|-1}\wedge w(\ut)_+\right)/ \sim
\end{equation*} 

Recall that a weighting in $w_0(\ut)$ means a branch has weight $0$, but there is only one branch in this case which must have weight $1$.  Thus the quotient by $G^{|\text{Ed}(\ut)|-1}\times w_0(\ut)$  is a quotient by the empty set and so adjoins a disjoint base point.  This explains the appearance of $w(\ut)_+$.

Now a $1$-tree is determined combinatorially by its number of edges, and in particular 
\begin{equation*}
\left(\coprod_{1\text{-trees } \ut} G^{|\text{Ed}(\ut)|-1}\times w(\ut)\right)_+
= \left(\coprod_{n\geq 0} G^{n}\times \Delta^n\right)_+
\end{equation*}

This correspondence is spelled out in Appendix $\ref{sec:diagrams}$.  Note that we allow the empty tree (after Remark $\ref{emptytreermk}$) which accounts for the $n=0$ factor in the above equation.  The fact that the identifications made in the quotient $\mathsf{B}(G_+)$ on the left hand side are precisely the same identification made when forming $BG$ as a quotient follows from Lemma $\ref{description}$.  

We thus find $\mathsf{B}(\op{P}\rtimes G)(1)\cong BG_+\cong EG_+\wedge_G S^0 \cong  EG_+\wedge_G\mathsf{B}(\op{P})(1)$, hence the claim.

\subsection{Proof of Theorem for $n\geq 2$}  We now fix $n\geq 2$.  

\subsubsection{Preliminary definitions} 

We start be giving a specialization of Corollary $\ref{blcor}$ to the case of a semi-direct product.

\begin{definition}  A $G$-marking of the set br($\Psi$) is the following data:
	\begin{itemize}
		\item  A point in $\EG$ (see Appendix $\ref{sec:diagrams}$) for each non root branch in br($\Psi$),
		\item  a point in $BG$ (which we associate to the root branch in br($\Psi$)).
	\end{itemize} 
\end{definition}

\begin{construction}\label{egmap}  Let $\op{P}$ be a reduced operad.  To each point $\Psi\in \mathsf{B}(\op{P}\rtimes G)(n)$ we associate: 
\begin{enumerate}
\item A point in $\mathsf{B}(\op{P})(n)$, and
\item a $G$-marking of br($\Psi$), 
\end{enumerate}
as follows.  First we define the ``standard representative'' of a non-base point in $\mathsf{B}(\op{P}\rtimes G)(n)$.  It is the unique weighted, labeled tree representing that point such that:
\begin{enumerate}[I.]
\item  All stable vertices are labeled by $\op{P}\subset \op{P}\times G^{\times \ast}$, as discussed above in subsection $\ref{secsdp}$.
\item  No branch has weight $0$.
\item  Every edge terminating at a stable vertex has weight $0$, and no other edge has weight $0$.
\item  No vertex is labeled by $e$, except possibly those immediately above stable vertices.
\end{enumerate}

Then for (1), we associate the base point to the base point.  Then to a non base point in $\mathsf{B}(\op{P}\rtimes G)(n)$ we associate the point in $\mathsf{B}(\op{P})(n)$ formed by first choosing the standard representative of the source and then removing all unstable vertices and their labels; see e.g.\ Figure $\ref{fig:bij}$.

For (2), observe that if $\Psi$ is the base point, there is a unique such marking (the empty marking), so we assume $\Psi$ is not the base point. Starting from the standard representative of $\Psi$ we disconnect each branch from its adjacent stable vertices, remove all $0$ weight edges, and scale the weight proportionally to $1$.  This process associates to each non root branch a point in $\EG:=B(G,G,\ast)$ and to the root branch a point in $BG$ via the diagrammatics of Appendix $\ref{sec:diagrams}$.
\end{construction}

\begin{figure}
	\centering
		\includegraphics[scale=.65]{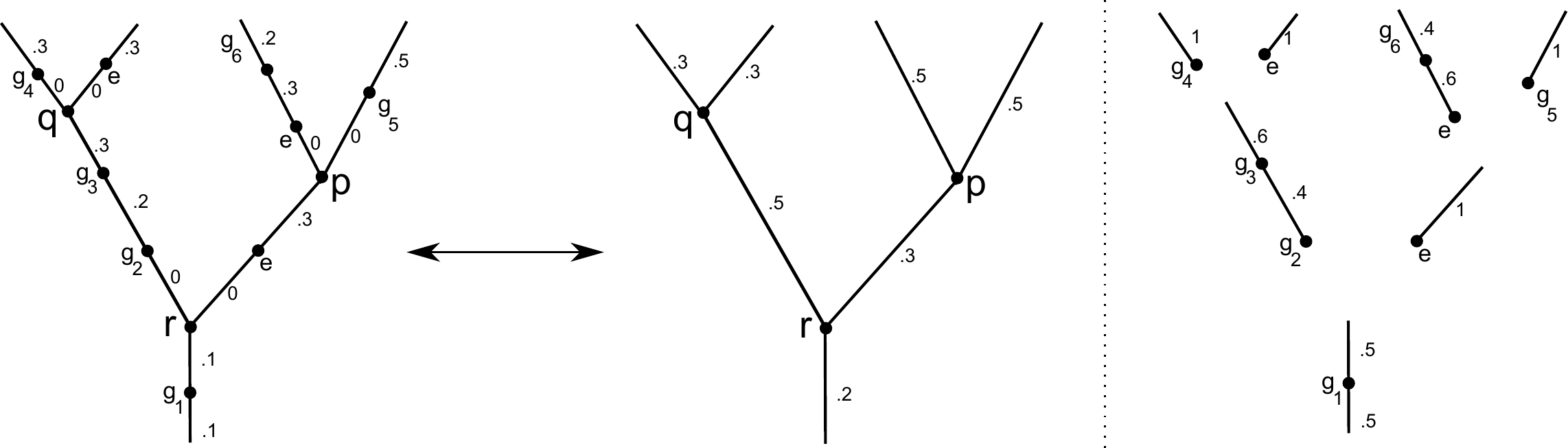}
	\caption{Depicting Construction $\ref{egmap}$.  Left is a point in $\mathsf{B}(\op{P}\rtimes G)(n)$ depicted via the standard representative.  Here $p,q,r\in \op{P}(2)$ and $g_i\in G$.  The construction associates to this the data on the right; a point in $\mathsf{B}(\op{P})(n)$ (middle right) and a $G$-marking (far right), depicted after Appendix $\ref{sec:diagrams}$.}
	 \label{fig:bij}
\end{figure}

\begin{lemma} \label{marklem} Let $\op{P}$ be a reduced $G$-operad.  Construction $\ref{egmap}$ establishes a bijection (of sets) between $\mathsf{B}(\op{P}\rtimes G)(n)$ and the set of pairs $(\psi, G\text{-markings}(\psi))$ for $\psi\in \mathsf{B}(\op{P})(n)$.
\end{lemma}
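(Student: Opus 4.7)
The plan is to prove the lemma by exhibiting an explicit inverse to Construction \ref{egmap} and verifying that both composites are the identity. The core task is the well-definedness of the standard representative, so I would begin by showing that every non-base point $\Psi \in \mathsf{B}(\op{P}\rtimes G)(n)$ admits a unique representative satisfying conditions I--IV.

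For existence, start from any representative of $\Psi$ and normalize in stages. First, apply the SDP identifications of Figure \ref{fig:sdp} at every stable vertex, migrating each factor $g_i \in G$ appearing in a label $(p, g_1, \ldots, g_r)$ onto a unary vertex on the adjacent incoming edge, connected by a zero-weight edge; this secures property I. Next, for every edge adjacent to a stable vertex whose weight is not already zero, apply identification (2) of Lemma \ref{description} in reverse to insert an $e$-labeled unary buffer vertex, splitting the edge into a zero-weight segment meeting the stable vertex and a positively weighted segment beyond; this gives III. Finally, remove any remaining $e$-labeled vertices that are not such buffers, using identification (2) forward, and contract any internal zero-weight edges not adjacent to stable vertices using identification (1), securing II and IV.

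For uniqueness, conditions I--IV obstruct every generating identification. An SDP move applied to a standard representative would reintroduce a $G$-factor into a stable vertex label, violating I; identification (2) cannot be applied in either direction without breaking IV once the buffers of III are fixed; and the only internal zero-weight edges present are those mandated by III, so (1) is blocked as well. Thus the standard representative of $\Psi$ is unique, and the assignment of Construction \ref{egmap} is well-defined. The inverse map takes a pair $(\psi, \mu)$, selects the stable $\op{P}$-labeled tree $(\st, p_\ast)$ representing $\psi$, installs a zero-weight buffer on each branch adjacent to each stable vertex, and fills each buffer segment on a non-root (resp.\ root) branch with the weighted chain of unary $G_+$-vertices prescribed by the associated $\EG$-point (resp.\ $BG$-point) of $\mu$ via the diagrammatics of Appendix \ref{sec:diagrams}, rescaled so that the total weight on that branch matches the weight of the corresponding branch of $\psi$. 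By construction the output is in standard form, and direct inspection of the two composites shows they are mutually inverse.

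The main obstacle is the uniqueness step. The SDP relation interacts nontrivially with the face/degeneracy identifications of the bar construction: the same underlying tree can carry many apparently different $(\op{P}\rtimes G)$-labelings once one is allowed to shuffle $G$-factors along unary chains and through stable vertex labels via SDP. Verifying that the four normalization conditions leave no residual symmetry therefore requires a patient case analysis, pairing each generating identification with each of I--IV to confirm no further move is admissible from a standard representative.
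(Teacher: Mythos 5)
Your proposal follows essentially the same route as the paper: both use the standard representative of Construction \ref{egmap} to define the forward map, define the inverse by pasting minimal representatives of the stable tree and of the $1$-trees encoding the $G$-marking (scaling and attaching a zero-weight edge below on non-root branches), and conclude by checking the two composites are the identity. The one place you invest more effort than the paper does is in arguing existence and uniqueness of the standard representative; the paper simply asserts this in Construction \ref{egmap}, and your argument that ``no generating identification applies to a standard form'' is, strictly speaking, only a terminality (irreducibility) statement rather than a confluence-style uniqueness proof, but this matches the paper's own level of rigor on that point and is not a divergence in approach.
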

\begin{proof}  
We use the standard representative.  The construction gives a map in one direction.  For the inverse, suppose we have a $G$-marking on the set br($\psi$), where $\psi\in \mathsf{B}(\op{P})$ (not the base point).  Choose the minimal representative of $\psi$; a stable tree with no zero weight branches, as well as the minimal representatives of the $G$-marking; $1$-trees (Appendix $\ref{sec:diagrams}$) associated to each branch with no zero weight edges.  We then simply paste the $1$-trees over the branch, scaling to the (necessarily non zero) weight and connecting below with a $0$ weight for the non root branches. See Figure $\ref{fig:bij}$.

The fact that this correspondence is bijective follows from the fact that the compositions are the identity in both ways.  Note also it takes base point to base point.
\end{proof}

\begin{remark}\label{notationrmk}  This lemma will allow us to depict points in $\mathsf{B}(\op{P}\rtimes G)(n)$ as lists $\Psi=(\st,p_\ast;\beta)$.  Here $\st$ is a stable weighted tree; the weights are part of the data.  We emphasize that $p_\ast= \wedge_{V(\st)} p_i$ takes values $p_i\in \op{P}(v_i)$.  In particular, the data $(\st, p_\ast)$ determine the point $\psi$ in $\mathsf{B}(\op{P})$ specified in (2) above.  And finally $\beta$ is a $G$-marking of $(\st, p_\ast)$.  This depiction is unique up to identifications in $\mathsf{B}(\op{P})$.  In particular if we insist that $\st$ be minimal (no $0$ weight branches) then each (non-base) point is uniquely specified by such a list.  Therefore, from now on when we use the notation $(\st,p_\ast;\beta)$, we will choose $\st$ to be in minimal form.

We emphasize that the bijective correspondence established by Lemma $\ref{marklem}$ is one of sets and is not in any sense continuous.
\end{remark}

\begin{definition}\label{augGdef} An augmented $G$-marking of the set br($\Psi$) is the following data: 
\begin{itemize}
		\item  a point in $B(G,G,G)$ for each internal branch in br($\Psi$),
		\item  a point in $\widetilde{EG}$ for each leaf branch in br($\Psi$), and
		\item  a point in $EG$ (associated to the root branch in br($\Psi$)).
	\end{itemize} 
Given a $G$-marking, call it $\beta$, of the set br($\Psi$), we define an augmented $G$-marking, call it $\beta_e(-)$, by placing $e$ in the right module position of $\widetilde{EG}$ (resp.\ $BG$) presented in the minimal form.  See e.g.\ the right hand side of Figure $\ref{fig:pi}$.
\end{definition}

We remark that given a point in $\mathsf{B}(\op{P})(n)$ and an augmented $G$-marking, we can construct a point in $\mathsf{B}(\op{P}\rtimes G)(n)$ as above, by connecting the right module label $e$ to a stable vertex above by a $0$ weight.  In this case, the construction is not bijective and the analog of Lemma $\ref{marklem}$ will not hold.  The failure to be bijective is encoded by the semi-direct product identifications (Equation $\ref{sdp}$).

\begin{definition} \label{gvdef} Given $\Psi=(\st,p_\ast,\beta)\in \mathsf{B}(\op{P}\rtimes G)$ we define a map $V(\st)\to G$ by $g_v:= \prod \mu(\beta_e(E))$ where the product is taken over all branches $E$ connecting the vertex $v$ to the root vertex of $\st$ (via the unique directed path, so as to form an ordered product) and $\mu\colon B(G,G,G)\to G$ is the multiplication map.
\end{definition}

\subsubsection{The homotopy equivalence.}  

We are now prepared to define the continuous map that will prove to be our homotopy equivalence between these two spaces, called $\sigma$:
\begin{equation*}
\sigma\colon  EG_+\wedge_G  \mathsf{B}(\op{P})(n) \to\mathsf{B}(\op{P}\rtimes G)(n).
\end{equation*}
To begin we will need the following definition.

\begin{definition} \label{gammadef}  Define $\gamma$ to be the map from $EG$ to $B(G,G,G)$ which takes a point in $EG$ and places the inverse of the product of the vertex labels in the left module position.  For example the point in $EG$ pictured below on the left (after Appendix $\ref{sec:diagrams}$) is sent to the point in $B(G,G,G)$ pictured below on the right:
	\begin{center}
		\includegraphics[scale=1.5]{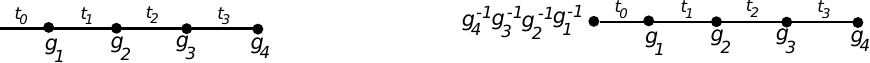}
	\end{center}
\end{definition}

We now define $\sigma([\zeta\wedge (\st,p_\ast)]):= (\st,p_\ast,\beta)$ where $\beta$ is defined by:
\begin{align*}
\beta(E)=\begin{cases} 
\zeta & \text{if } E \text{ is the root branch,} \\  
\text{pr}_r(\gamma(\zeta)) & \text{if } E \text{ is a leaf branch,} \\
\gamma(\zeta) & \text{if } E \text{ is an internal branch.}
\end{cases} 
\end{align*}

Note that here $\beta$ is an augmented $G$-marking (Definition $\ref{augGdef}$) and pr$_r\colon B(G,G,G)\to B(G,G,\ast)$ is projection on the right induced by $G\to \ast$.

\begin{figure}
	\centering
	\includegraphics[scale=.85]{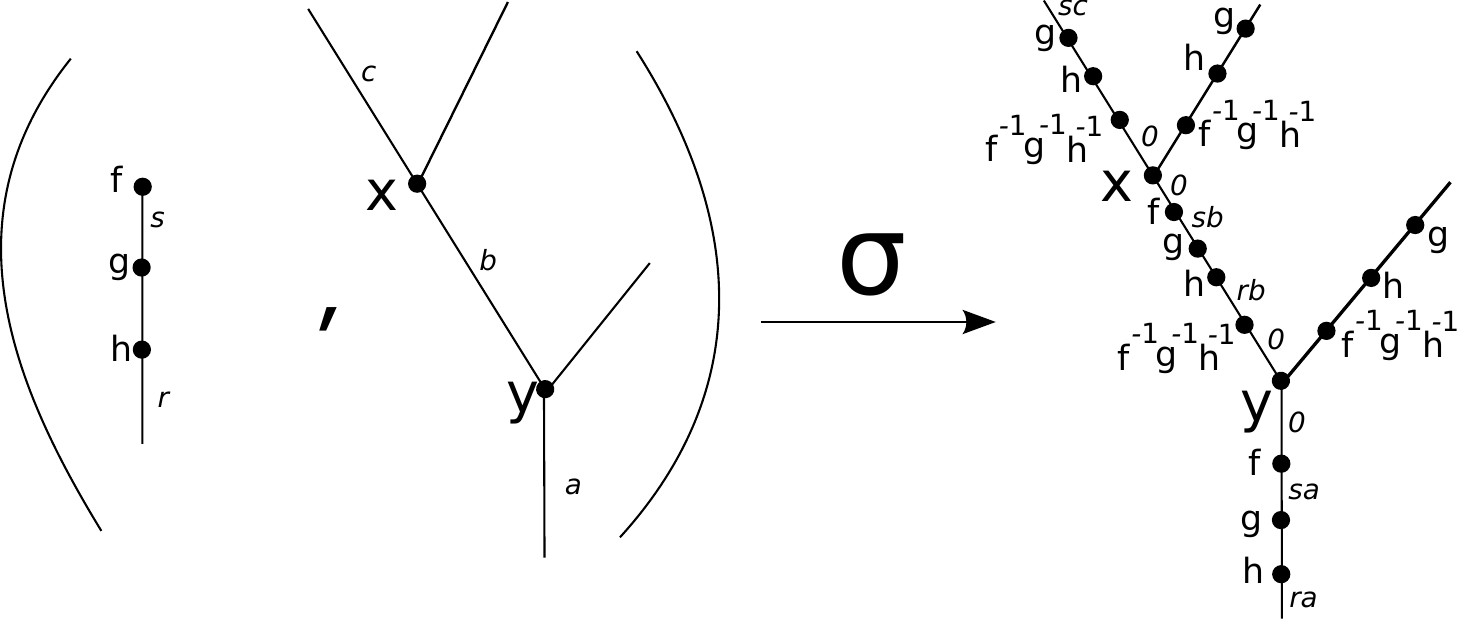}
	\caption{The map $\sigma$.  Here $a,b,c,s,r$ are weights and $f,g,h \in G$.  Note that not all weights are depicted and the leaf labeling is suppressed.}
	\label{fig:sigma}
\end{figure}

\begin{lemma}  As defined above, $\sigma$ is a well defined continuous map of based spaces.
\end{lemma}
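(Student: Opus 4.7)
The plan is to verify three properties of the formula defining $\sigma$: continuity on the unquotiented product $EG_+\times \mathsf{B}(\op{P})(n)$, the basepoint condition, and descent through both the smash-product and $G$-orbit quotients (including the identifications that already define $\mathsf{B}(\op{P})(n)$ by Lemma \ref{description}).

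\medskip

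Continuity is the easiest piece. The map $\gamma\colon EG\to B(G,G,G)$ of Definition \ref{gammadef} is continuous because it is built from the group multiplication and inversion together with the bar construction structure; the projection $\mathrm{pr}_r\colon B(G,G,G)\to \widetilde{EG}$ is continuous by functoriality of the bar construction; and the assembly that takes a stable weighted $\op{P}$-labelled tree $(\st,p_\ast)$ together with an augmented $G$-marking and produces a point of $\mathsf{B}(\op{P}\rtimes G)(n)$ (via Construction \ref{egmap} and Lemma \ref{marklem}) is continuous since it is exactly the scaling-and-pasting operation used there. The basepoint condition is handled by convention: the formula is only meaningful away from both basepoints, so I send the basepoint in each factor to the basepoint in $\mathsf{B}(\op{P}\rtimes G)(n)$, and continuity extends to the collapsed basepoints because as any branch weight (or overall weight) goes to zero, the associated augmented marking degenerates and the resulting point collapses to the basepoint in the target as well.

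\medskip

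The heart of the argument is well-definedness on the quotient. The smash identification goes to the basepoint by the above. For the identifications internal to $\mathsf{B}(\op{P})(n)$, since $\op{P}$ is reduced the only nontrivial move is collapsing a zero-weight internal branch and composing the adjacent labels in $\op{P}$: on the output side, this is the same move applied to the underlying stable weighted tree, and the augmented $G$-marking on the non-collapsed branches is preserved because $\gamma(\zeta)$, $\mathrm{pr}_r(\gamma(\zeta))$, and $\zeta$ itself do not depend on $\st$. The remaining identification is the diagonal $G$-action: one must show that replacing $\zeta$ by $\zeta\cdot g$ in $EG$ has the same effect on $\sigma$ as replacing each $p_v$ by $g p_v$ in $(\st,p_\ast)$. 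Unpacking, the $G$-action on $\zeta$ multiplies the product of its vertex labels (and hence the left-module entry of $\gamma(\zeta)$) by $g^{-1}$, whereas the diagonal action on $(\st,p_\ast)$ introduces a factor of $g$ on each stable vertex label when passing to $\op{P}\rtimes G$. These two effects are equated by iterated application of the semi-direct product relations (SDP) of Subsection~\ref{secsdp}: the factor $g$ sitting adjacent to each stable vertex can be migrated downward, along the zero-weight edge connecting it to the standard representative's unstable branch structure, until it lands in the left-module slot of $\gamma(\zeta)$ and cancels the $g^{-1}$ produced by the action on $EG$.

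\medskip

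The main obstacle is the last point: bookkeeping the SDP rewrites under the standard representative so that the $G$-shift on $EG$ is exactly absorbed by the diagonal $G$-action on the vertex labels of $(\st,p_\ast)$. I expect this to be a careful but essentially formal verification, aided by first reducing to the case of a single branch (where it becomes the statement that $\gamma$ is $G$-equivariant modulo left multiplication in $B(G,G,G)$) and then noting that both the action on $\zeta$ and the assembly into $\mathsf{B}(\op{P}\rtimes G)(n)$ are performed branch-by-branch, with consistent SDP relations at each interface with a stable vertex.
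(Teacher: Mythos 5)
Your treatment of well-definedness (the $G$-coinvariants via the SDP relations, using the standard/minimal representative) is in line with the paper's, though more elaborate than needed: once one insists on minimal representatives the $\mathsf{B}(\op{P})$-internal identifications are already accounted for, so only the $G$-orbit identifications remain, exactly as you argue in the final part.

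The genuine gap is in your treatment of continuity, which you declare ``the easiest piece.'' You claim that the assembly ``exactly the scaling-and-pasting operation'' of Construction \ref{egmap}/Lemma \ref{marklem} is continuous. But this is explicitly false: Remark \ref{notationrmk} states that the bijective correspondence of Lemma \ref{marklem} ``is one of sets and is not in any sense continuous.'' The pasting depends on a choice of minimal representative, and minimal form jumps discontinuously as branch weights or $EG$-edge weights tend to $0$. Consequently you cannot conclude continuity of $\sigma$ by composing visibly continuous maps; one must check by hand that $\sigma$ achieves the correct limit at the seams. The two degeneration types that need to be verified are (a) an internal branch weight of $\st$ going to $0$ (where one contracts in the target and uses that the unstable vertex labels along the contracted branch multiply to the identity), and (b) an edge weight in the $EG$-factor $\zeta$ going to $0$ (where non-root edges contract compatibly, while a root edge produces an $h h^{-1}$ cancellation). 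You partially address (a) while discussing well-definedness, but you never address (b) at all, and you never acknowledge that the naive ``composition of continuous maps'' argument is inadmissible. Without these two limit checks, the proof does not establish continuity.
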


\begin{proof}  Since we assume the input is in minimal form, to check $\sigma$ is well defined as a map of sets we need only check that the definition is independent of a choice of representative of $G$ coinvariants.  This follows immediately from the SDP identifications in the target.	
	
	So it remains to argue that $\sigma$ is continuous.  We argue (as below in the proof of Lemma $\ref{paths}$) that it is enough to consider paths in the weights, and the only possible ambiguity arises if branch weights go to zero or if an edge (or edges) in the factor of $EG$ go to zero in the source.  We can check these cases by hand.
	
	If a branch weight goes to $0$ either it is a leaf or root branch (in which we converge to the base point in both the source and target) or it is an internal branch.  In this latter case, we may compose along this branch in the target by contracting $0$ weight edges and multiplying the adjacent unstable vertex labels.  Since the product of the target's unstable vertex labels on this branch is the identity, this is the same as taking the limit in the source before applying $\sigma$.

	Now consider what happens if an edge weight in $\zeta$ converges to zero.  For an edge which is not the root edge we observe that contracting the corresponding $0$ weight edges before or after applying $\sigma$ yields the same result.  For example, letting $s\to 0$ in Figure $\ref{fig:sigma}$.  If it is the root edge, we see this has the effect in the limit of the target of multiplying an element by its inverse which corresponds to the element having disappeared in the target of the limit.  For example, letting $r\to 0$ in Figure $\ref{fig:sigma}$. \end{proof}

\subsubsection{The homotopy inverse.} 
In this subsection we define a retraction of $\sigma$, which we call $\pi$:
\begin{definition}\label{pidef}  Define a map 	
	\begin{equation*}
	\pi\colon \mathsf{B}(\op{P}\rtimes G)(n) \to EG_+\wedge_G \mathsf{B}(\op{P})(n)
	\end{equation*}
by $\pi(\st,p_\ast,\beta):= [\beta_e(R)\wedge (\st,g_{v_\ast}p_\ast)]$, where $R$ is the root branch of $\st$.  See Figure $\ref{fig:pi}$.
\end{definition}

\begin{figure}[h]		
	\centering 	
	\includegraphics[scale=.75]{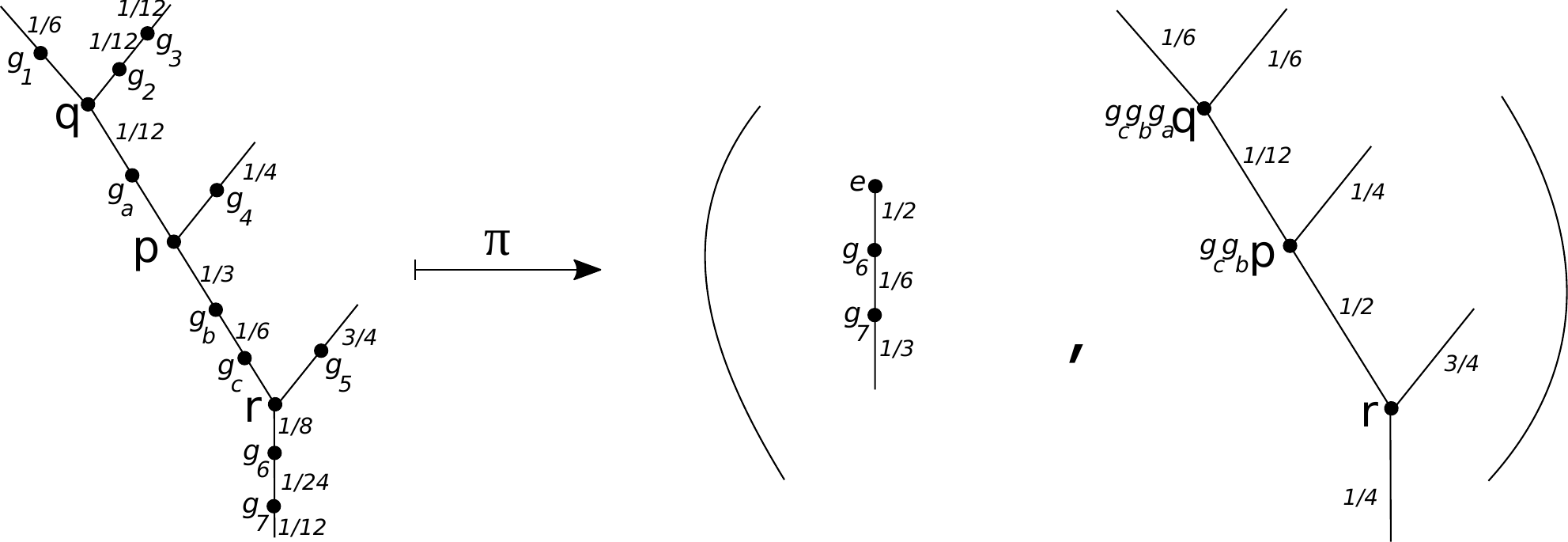}
	\caption{Graphical description of the map $\pi$, with $p,q,r\in\op{P}(2)$ and $g_i\in G$.  Note that $0$ weights and leaf labels are not depicted. }
	\label{fig:pi}
\end{figure}

\begin{lemma}\label{paths} $\pi$ is a continuous, $S_n$ equivariant retraction of $\sigma$.
\end{lemma}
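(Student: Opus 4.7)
The plan is to verify in turn that $\pi$ is well-defined, $S_n$-equivariant, a retraction of $\sigma$, and continuous. Well-definedness on the underlying set follows from the uniqueness of the minimal-form representation $(\st, p_\ast, \beta)$ guaranteed by Remark $\ref{notationrmk}$, and the base point is preserved by the empty-product convention. For $S_n$-equivariance, the symmetric action only permutes leaf labels of the underlying stable tree and leaves both the root branch $R$ and every directed path to the root invariant, so $\beta_e(R)$ and $g_{v_\ast}$ transform equivariantly.

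The central calculation is $\pi \circ \sigma = \mathrm{id}$. Starting from $[\zeta \wedge (\st, p_\ast)]$, the map $\sigma$ produces an augmented $G$-marking with $\zeta$ on the root branch, $\gamma(\zeta)$ on each internal branch, and $\mathrm{pr}_r(\gamma(\zeta))$ on each leaf branch. To apply $\pi$ one first converts to a standard (non-augmented) representative $(\st, p'_\ast, \beta)$ via the SDP identifications, absorbing any right-module element different from $e$ into the adjacent stable vertex label and propagating it up the tree per the SDP rule. The crucial design feature is $\mu(\gamma(\zeta)) = e$: the left-module factor prepended by $\gamma$ (Definition $\ref{gammadef}$) is the inverse of the product of all other labels, so they cancel under multiplication. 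Consequently, in Definition $\ref{gvdef}$ the path-ordered product of $\mu(\beta_e(E))$-values telescopes to a single contribution from the root branch, and so $g_v = \mu(\beta_e(R))$ is a constant $g$ independent of $v$. Then $(\st, g_{v_\ast} p_\ast)$ is the diagonal $g$-action on the original labeling, so by the coinvariance relation the class $[\beta_e(R) \wedge (\st, g_{v_\ast} p_\ast)]$ can be moved to $[\beta_e(R) \cdot g^{\pm 1} \wedge (\st, p_\ast)]$. The residual algebraic identity $\beta_e(R) \cdot g^{\pm 1} = \zeta$ in $EG$ then follows from the explicit SDP accounting of how the right module of $\zeta$ was absorbed into $p'_\ast$ and simultaneously recorded in the bar-construction presentation of $\beta_e(R)$.

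Continuity follows the template of the preceding lemma by restricting attention to paths in the weights and examining the three critical degenerations: (a) a leaf or root branch weight going to zero sends source and target both to the base point; (b) an internal branch weight going to zero triggers operadic composition in the source (item (1) of Lemma $\ref{description}$), merging the adjacent $\op{P}$-labels while the adjacent $G$-markings on that branch multiply in $B(G,G,G)$; the formula for $\pi$ respects this because the merged $\mu$-value equals the product of the prior $\mu$-values, so $g_v$ and $\beta_e(R)$ vary continuously; (c) an edge inside a $G$-marking collapsing corresponds to multiplication of adjacent $G$-labels in the bar construction of $G$, which is continuous by construction.

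The principal obstacle is the explicit bookkeeping in the retraction step, namely verifying the algebraic identity $\beta_e(R) \cdot g^{\pm 1} = \zeta$ together with the matching SDP absorption relating $p'_\ast$ and $p_\ast$, with the correct signs and left/right $G$-action conventions on $EG$, $\EG$, and the $G$-module $\op{P}$. Once this case check is performed, the other three properties reduce to routine verifications of the form discussed above.
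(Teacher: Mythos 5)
Your overall structure (well-definedness, equivariance, retraction, continuity) matches the paper, and the first two points are handled correctly. The retraction sketch is somewhat more elaborate than the paper's one-liner and goes in the right direction, though the precise bookkeeping (whether the residual $G$-factor is $e$ or gets cancelled by coinvariance) is left partly implicit; that is fine for a sketch.

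The continuity argument, however, has a genuine gap. Your case analysis (a)--(c) covers leaf/root branches collapsing, internal branches collapsing, and edges collapsing in the \emph{interior} of a branch (between two unstable vertices). But it misses the case the paper explicitly singles out as the only nontrivial one: an edge \emph{immediately below a stable vertex} going to zero while the branch containing it does not. Such a collapse is not ``multiplication of adjacent $G$-labels in the bar construction of $G$'' as your item (c) would have it --- it is a semi-direct product identification, in which the unstable $G$-label is absorbed into the stable $\op{P}$-label and simultaneously pushed onto all $G$-labels above. One must check that $\pi$ commutes with this operation, and there are two distinct subcases depending on whether the stable vertex in question is the least stable (root) vertex or not. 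In the former subcase it is not enough to invoke SDP identifications in the target $\mathsf{B}(\op{P})$: since the absorbed factor $h$ multiplies every stable label in the image of $\pi$, one must additionally use the $G$-coinvariance on $EG_+\wedge_G \mathsf{B}(\op{P})$ to slide $h$ off the labels and onto the $EG$ factor. Your proposal never mentions coinvariants in the continuity step, so this subcase is unaddressed. Without it, the proof of continuity of $\pi$ is incomplete.
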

\begin{proof} 
Since the notation $(\st, p_\ast, \beta)$ depicts (non base) points in $\mathsf{B}(\op{P}\rtimes G)(n)$ uniquely (after  Remark $\ref{notationrmk}$), $\pi$ is obviously a well defined map of sets.  It is also immediate to see that $\pi$ is based and $S_n$ equivariant.  To say $\pi$ is a retraction of $\sigma$ means that $\pi\circ\sigma=id$, and this in turn is just the statement that the product of $G$ labels from each vertex to the least stable vertex is the identity in the target.


So it remains to show that $\pi$ is continuous.  Since the source of $\pi$ is a quotient of a coproduct of a quotient,  it is enough to show that for an arbitrary $n$-tree $\ut$, the map
\begin{equation}\label{picont}
\op{P}\rtimes G(\ut)\times w(\ut)\to EG_+\wedge_G \mathsf{B}(\op{P})
\end{equation}
is continuous.  For this, we fix a convergent sequence $x_j\to x$ in the source of line $\ref{picont}$.  Since $\ut$ is fixed, so is the set of edges and we may consider the convergent sequence of edge weights associated to any edge $E\in \text{Ed}(\ut)$.  If no such sequence converges to zero, then $\pi$ obviously achieves its limit on this sequence.  To see this, using the fact that there are finitely many edges, we may pick an $\epsilon >0$ and an integer $N$ such that if $j>N$ then all edge weights are at least $\epsilon$.  Thus the image of $\pi$ can be represented by the same diagram with continuously changing labels after stage $N$.

It now remains to consider the case that one or more edge weights converge to zero.  Let us use the short hand vernacular ``an edge goes to zero'' to describe this phenomenon.  We first observe that if an internal branch goes to zero we may compare the limit of the image of $\pi$ with the image of the limit using the equivariance of the operadic compositions ($\op{P}$ is a $G$-operad by assumption).  If an external branch goes to zero, the sequence converges to the base point in both the source and the target.

Next we consider an edge going to zero within a branch which does not go to zero.  The only case which does not follow from an immediate comparison of the unstable weighted labeled trees representing the limit of the image and the image of the limit, is when an edge (or edges) immediately below a stable vertex goes to zero.  If said vertex is not the least stable vertex, we may use the semi-direct product identifications to see that the vertex labels of $\pi$ of the limit are still given by the limit of the product of all unstable vertex labels connecting $v$ to the least stable vertex.

At the same time, if the edge (or several consecutive edges) immediately below the least stable vertex converges to $0$, we use both the SDP identifications and coinvariants to see that $\pi$ preserves the limit.  That is, if we let $h\in G$ be the product of those consecutive unstable vertex labels below the least stable vertex which converge to $0$, then to compare $\pi(x)$ with the limit of $\pi(x_j)$ we use coinvariants to move the additional factor of $h$ appearing on each vertex in $\pi(x)$ (owing to the semi-direct product identifications in the pre-image) to the $EG$ factor to find the limit of $\pi(x_j)$. \end{proof}

\subsubsection{Construction of the homotopy}

\begin{theorem}  The maps $\sigma$ and $\pi$ are homotopy equivalences.
\end{theorem}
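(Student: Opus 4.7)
By Lemma \ref{paths}, $\pi \circ \sigma$ is already the identity on $EG_+ \wedge_G \mathsf{B}(\op{P})(n)$, so the remaining task is to construct a homotopy $\sigma \circ \pi \simeq \mathrm{id}_{\mathsf{B}(\op{P} \rtimes G)(n)}$. The first step is to unpack $\sigma \circ \pi$ explicitly: for a non-base point $\Psi = (\st, p_\ast, \beta)$ in the minimal form of Remark \ref{notationrmk}, write $\zeta := \beta_e(R) \in EG$ for the augmented root marking and $g_v$ as in Definition \ref{gvdef}. Then $\sigma(\pi(\Psi))$ has the same underlying stable tree $\st$, vertex labels twisted to $(g_v p_v)_v$, and branch markings entirely rebuilt from $\zeta$: namely $\zeta$ on the root, $\gamma(\zeta)$ on each internal branch, and $\mathrm{pr}_r \gamma(\zeta)$ on each leaf. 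Thus $\Psi$ and $\sigma\pi(\Psi)$ share the same stable weighted tree and the same $G$-orbit of underlying $\op{P}$-labeling; the two differ only in how the total $G$-content is apportioned between vertex labels and branch markings.

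My plan is to build a homotopy $H\colon I \times \mathsf{B}(\op{P}\rtimes G)(n) \to \mathsf{B}(\op{P}\rtimes G)(n)$ with $H_0 = \mathrm{id}$ and $H_1 = \sigma \circ \pi$ that continuously redistributes this $G$-content. Concretely, $EG$, $\widetilde{EG}$, and $B(G,G,G)$ each carry a canonical simplicial null-homotopy supplied by the degeneracy that inserts the identity, and the augmentation $\beta \mapsto \beta_e$ of Definition \ref{augGdef} already exploits this structure. At time $t$ I interpolate, using these null-homotopies applied to the weight coordinates, between the original marking $\beta(E)$ and its $\sigma\pi$-value on each non-root branch, while simultaneously transferring a $t$-fraction of the relevant $G$-data across the adjacent stable vertex via the SDP identifications coming from \eqref{sdp}. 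The $G$-equivariance of operadic composition then guarantees that $\{H_t(\Psi)\}$ traces out a well-defined path inside the SDP-equivalence class joining $\Psi$ to $\sigma \pi(\Psi)$.

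The verification that $H$ is continuous and respects the bar and SDP identifications follows the template of the continuity argument in Lemma \ref{paths}. By restricting to fixed tree shape $\ut$ it suffices to track paths in the weight and $G$-marking coordinates, and the only subtleties occur in the usual degenerations: a branch weight tending to zero (where both source and target converge either to the basepoint or to an operadic composition, handled via Lemma \ref{description}), an edge within a marking collapsing (handled by the simplicial face identifications in the bar construction of $G_+$), and the joint SDP/coinvariant ambiguity at the least stable vertex (handled exactly as in Lemma \ref{paths}).

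The main obstacle will be writing the interpolation formula so that it is simultaneously well-defined on the quotient source, invariant under SDP identifications and $G$-coinvariants, and extends continuously over the basepoint and zero-weight strata. The natural $s_0$-based null-homotopies are $G$-equivariant in each factor, so SDP compatibility should follow formally from the equivariance already built into Construction \ref{egmap}; checking that no new discontinuity arises at the basepoint stratum is then the remaining delicate point, but it can be addressed by the same case analysis used in Lemma \ref{paths}.
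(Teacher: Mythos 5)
Your overall strategy matches the paper's: use $\pi\circ\sigma=\mathrm{id}$ from Lemma~\ref{paths} and construct a homotopy $H$ from $\mathrm{id}$ to $\sigma\circ\pi$ by redistributing the $G$-content between vertex labels and branch markings, using the contractibility of $\widetilde{EG}$ and $B(G,G,G)$ along weight coordinates rather than trying to interpolate in $G$ itself. You also correctly identify that continuity at zero-weight and basepoint strata is the delicate point.

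However, there is a genuine gap in the parametrization. You propose to interpolate \emph{all} branch markings simultaneously with a single global parameter $t$, ``transferring a $t$-fraction of the relevant $G$-data across the adjacent stable vertex.'' In a topological group there is no notion of a $t$-fraction of a group element, and the contracting null-homotopies on $\widetilde{EG}$ you invoke do not supply one: the element $g_v$ either appears on the vertex label or it does not. The only well-defined moment at which $g_v$ can cross over from the branch marking to the vertex label is the instant at which the branch below $v$ is \emph{fully} contracted. This forces a sequential processing of the tree in order of altitude $|v|$, which is exactly what the paper's ``rising water level'' does: at time $s$, vertex $v$ is twisted by $g_v$ precisely when $s\geq |v|$, and the branch $E$ is contracted along a line segment $\ell_E$ parametrized over $[|v^E|,|v_E|]$ so that its endpoints match the vertex-label transitions at $v^E$ and $v_E$. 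This synchronization is what makes the homotopy both well-defined and continuous: when an internal branch weight $|E|\to 0$, the two adjacent vertices become ``no longer active'' at the same altitude, and the SDP identification across the merged vertex reconciles the limits. A flat parametrization cannot achieve this; at a generic $t\in(0,1)$ it either double-counts the $G$-content (vertex already twisted by the full $g_v$ while the branch below is only partially contracted) or must jump discontinuously at $t=1$. So the altitude-based parametrization is not an implementation detail but the essential idea you would need to supply to close the argument.
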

\begin{proof}  Since $\pi\circ\sigma=id$, this result will follow from the construction of:
\begin{align*}
H\colon I\times \mathsf{B}(\op{P}\rtimes G)(n)\to\mathsf{B}(\op{P}\rtimes G)(n) \\ 
\text{such that  } H(0,-)= id \text{ and } H(1,-)= \sigma\circ\pi.
\end{align*}
Before giving the formal definition of $H$ we provide an informal description.

{\bf Informal description of $H$:}  Let $s$ denote the $I$ parameter and consider the action of $H(s,-)$ on the space of $\op{P}\rtimes G$-labeled weighted trees.  Since this is an informal description we'll call $s$ the water level and we imagine the water level rising.  The rising water level has no effect until it reaches the least stable vertex.  As the water passes this vertex it begins to contract the copy of $\widetilde{EG}$ corresponding to every branch that is partially submerged at the speed dictated by the branch weights.  The target of this contraction is dictated by the root labeling; specifically it sends $g_{v^E}\beta_e(E)$ to $\gamma(\beta_e(R))g_{v_E}\in B(G,G,G)$ in $|E|$ units of time (here $R$ is the root branch of the given point and $\beta_e$ is the augmented $G$-marking as in Definition $\ref{augGdef}$).  The route for this contraction is a line segment formed by grafting and scaling the graphical representation of these points.  We continue this process until the tree is completely submerged.

{\bf Formal definition of $H$:}  
Fix $\Psi \in \mathsf{B}(\op{P}\rtimes G)(n) $ and write $\Psi=(\st,p_\ast,\beta)$ (after Remark $\ref{notationrmk}$).  We will define $H(s,(\st,p_\ast,\beta)):=(\st,p_\ast^s,\beta^s)$.  Observe that $H$ preserves the underlying weighted stable tree $\st$.  We have now to define $p_\ast^s$ and then $\beta^s$.  We disclose that this is an abuse of notation in that $p_\ast^s$ will depend not just on $p_\ast$ and $s$ but on $\beta$ as well.

For a vertex $v\in V(\st)$, recall (Definition $\ref{gvdef}$) that $g_v$ denotes the product of the unstable vertex labels between $v$ and the root vertex of $\st$.  We then define $p_\ast^s=\wedge_{v\in V(\st)} p_v^s$ where
\begin{equation}\label{HVert}
p_v^s:=\begin{cases} g_v p_v & \text{if $v$ is no longer active at $s$} \ \ (\text{Definition } \ref{wtterms}), 
\\ p_v &  \text{ else.} \end{cases}
\end{equation}

To define $\beta^s$ we need an auxiliary definition.  For a non-root branch $E$ of $\st$ we define $\ell_E\colon [|v^E|,|v_E|]\to B(G,G,G)$ to be the parametrized line segment connecting $g_{v^E}\beta_e(E)$ and $\gamma(\beta_e(R_\Psi))g_{v_E}$ by grafting $g_{v^E}\beta_e(E)$ above $\gamma(\beta_e(R_\Psi))$, and linearly scaling the weights in $|E|$ units of time.  (See the green branch of Figure $\ref{fig:ho}$.)  We then define:
\begin{equation*}
  \beta^s(E):=\begin{cases}
  
\beta(E) & \text{if $E$ is not yet active at $s$ or if $E$ is the root edge,} \\  
 \ell_E(s) & \text{if $E$ is active at $s$ and not the root edge,} \\ 
\gamma(\beta_e(R))  & \text{if $E$ is no longer active at $s$ and not the root edge.} \end{cases}
\end{equation*}

Here we are using the notation and terminology from Definitions $\ref{wtterms}$ and $\ref{gammadef}$.  Observe in particular that $g_{v^E}\beta_e(E)$ is the branch label of $E$ at the instant $E$ becomes active.  In the case that $E$ is a leaf we may equivalently view this construction as specifying a parametrized line segment in $\widetilde{EG}$.

\begin{figure}[h]		
	\centering 	
	\includegraphics[scale=.4]{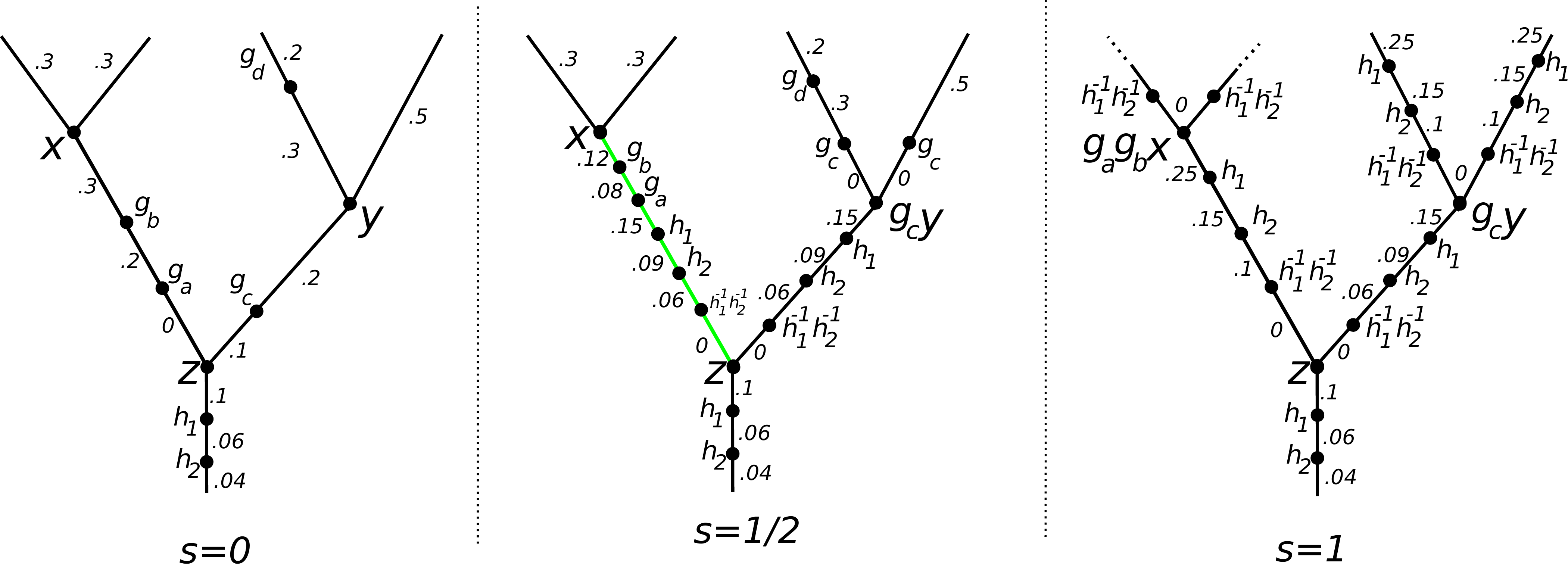}
	\caption{Graphical description of the homotopy $H$. Here $g_i,h_j\in G$ and $x,y,z\in \op{P}(2)$.  At time $s=1/2$ the green branch is being actively contracted. }
	\label{fig:ho}
\end{figure}

The fact that $H$ is well defined as a map of sets follows from the fact that it was defined using a choice of standard representative.  To argue that $H(1,-)=\sigma\circ\pi$ observe that, since $s=1$, everything is no longer active.  This means we have replaced the marking of all non-root edges with $\gamma(\beta_e(R))$, while also multiplying the stable vertex labels by $g_v$ for each vertex $v$.  This is exactly the description of $\sigma\circ\pi$.

So it now remains to show that $H$ is continuous.  Since the source of $H$ is a quotient space, it is enough to show that its composite with the quotient map is continuous.  The source of this map is itself the quotient of a coproduct over $n$-trees.  So we fix a tree $\ut$ and let $\tilde{H}$ denote the induced map: 
$I\times(\op{P}\rtimes G)(\ut)\wedge \bar{w}(\ut)\stackrel{\tilde{H}}\longrightarrow \mathsf{B}(\op{P}\rtimes G)(n)
$, and it is enough to show $\tilde{H}$ is continuous.

Pick a sequence of points $(s_j,\Psi_j)$ converging to $(s,\Psi)$.  Then $s_j \to s$, $|E|_j\to |E|$, and $|v|_j\to |v|$, where $|E|_j$ is the weight of edge $E$ at stage $j$, $|v|_j$ is the altitude of a vertex $v$ at stage $j$, and no subscript denotes the weight (resp.\ altitude) in the limit.  In particular, for each vertex the sequence $s_j-|v|_j$ converges (to $s-|v|$).

We now argue $lim_j\tilde{H}(s_j,\Psi_j)=\tilde{H}(s,\Psi)$ by considering two cases.  First suppose that for every stable non-root vertex $v_i$ we have $s-|v_i|\neq 0$.  Then, since there are finitely many stable vertices, and since $s_j-|v_i|_j$ converges to some (necessarily non-zero) number, we may choose $\epsilon>0$ and an integer $N$ such that $|s_j-|v_i|_j|>\epsilon$ for all $j>N$ and for all stable vertices $v_i$.  This means for all $j>N$, $\tilde{H}(s_j,\Psi_j)$ is represented by the same unweighted, unstable tree (not necessarily equal to $\ut$).  

Let us consider two subcases. If no edges converge to zero in the source (i.e.\ if $|E|\neq 0$, for each $E\in \text{Ed}(\ut)$), then $lim_j\tilde{H}(s_j,\Psi_j)$ is in minimal form and so we may compare this point with $\tilde{H}(s,\Psi)$ simply by comparing the vertex labels and edge weights.  Since the group action is continuous, the sequence of stable vertex labels in the target is convergent.  Since the weights of edges in the target are all determined by scaling the edges in the source, each sequence of edge weights in the target is continuous.  Whence the first subcase.

Next, suppose there is a non-empty set of edges $\{E_z\}$ whose edge weights $|E_z|_j$ converge to zero.  Then $\tilde{H}(s,\Psi)$ is defined via the tree having those edges contracted.  It is sufficient to show that using identifications in the bar construction to contract the zero edges in the weighted labeled tree defining $lim_j\tilde{H}(s_j,\Psi_j)$ yields the weighted labeled tree defining $\tilde{H}(s,\Psi)$, and we can check this combinatorially.

If an entire branch converges to zero in the source, then clearly the same is also true in the target so we may assume this is not the case.  If an edge immediately below a stable vertex which is not yet active converges to zero, this follows from continuity of the original sequence (since not yet active means not yet modified).  If an edge immediately below the least stable vertex converges to zero, this follows from the SDP identifications.  If the root edge converges to zero, this follows from the definition of $\gamma$.  If an edge not adjacent to a stable vertex and not the root converges to zero, this follows from continuity of the group multiplication.  Whence the second subcase.


The other case is that $s-|v|=0$ for a (or several) stable non-root vertex $v$.  To show $\tilde{H}$ achieves its limit we can consider possible subsequences having $s_j-|v|_j>0$ and $s_j-|v|_j<0$ respectively.  The fact that $\tilde{H}$ would achieve its limit on the former subsequence follows from the same argument as above; it can be represented (for sufficiently high $j$) by a single underlying unstable tree with the labels changing in a continuous fashion.

Thus it suffices to consider a sequence in which $s_j-|v|_j$ converges to $0$ from below, and so we now consider $(s_j,\Psi_j)$ to be such a sequence.  In this case, the point $lim_j \tilde{H}(s_j,\Psi_j)$ is represented by a weighted $\op{P}\rtimes G$-labeled tree whose vertex $v$ is labeled by the limit of the vertex labels, but immediately below $v$ are unstable vertices connected to $v$ by $0$ weights whose  product is $g_v$.  On the other hand we see $\tilde{H}(s,\Psi)$ has a factor of $g_v$ on and immediately above this vertex connected by $0$ weights.  But the semi direct product identifies these two expressions in the bar construction $\mathsf{B}(\op{P}\rtimes G)$, and so $\tilde{H}$ achieves its limit on such a sequence.\end{proof}

Having constructed our level-wise equivalence of $\mathbb{S}$-modules, we conclude the proof of the main theorm by observing:

\begin{corollary}  With respect to the above cooperad structure, the homotopy equivalence $\sigma$ is a morphism of cooperads $EG_+\wedge_G\mathsf{B}(\op{P})\to\mathsf{B}(\op{P}\rtimes G)$.
\end{corollary}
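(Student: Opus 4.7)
My plan is to verify the cooperad compatibility of $\sigma$ by checking that the square
\[
\begin{array}{ccc}
EG_+\wedge_G\mathsf{B}(\op{P})(A\cup_aB) & \stackrel{\sigma}{\longrightarrow} & \mathsf{B}(\op{P}\rtimes G)(A\cup_aB) \\
\downarrow & & \downarrow \\
[EG_+\wedge_G\mathsf{B}(\op{P})(A)]\wedge[EG_+\wedge_G\mathsf{B}(\op{P})(B)] & \stackrel{\sigma\wedge\sigma}{\longrightarrow} & \mathsf{B}(\op{P}\rtimes G)(A)\wedge\mathsf{B}(\op{P}\rtimes G)(B)
\end{array}
\]
commutes for every triple $(A,B,a)$, where the left vertical arrow is induced by the cooperad structure on $\mathsf{B}(\op{P})$ together with the diagonal $EG\to EG\times EG$, and the right vertical is the cooperad structure of Proposition \ref{coopmap}.

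I would begin by fixing a non-basepoint representative $[\zeta\wedge(\st,p_\ast)]$ in minimal form and observing that both compositions return the basepoint unless the underlying stable tree decomposes as $\st=\st_A\circ_a\st_B$. So in the interesting case I let $E_a$ denote the internal branch of $\st$ at which the grafting occurs. Both paths then produce points supported on the same pair of underlying weighted stable trees and carrying the same $\op{P}$-labels, since these data are determined by the cooperad structure on $\mathsf{B}(\op{P})$ in either order. The verification thus reduces to a branch-by-branch comparison of the resulting $G$-markings on the pair of factors.

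For any branch of $\st_A$ or $\st_B$ whose role as a root, leaf, or internal branch coincides with its role in $\st$, both paths assign the same marking---$\zeta$, $\mathrm{pr}_r(\gamma(\zeta))$, or $\gamma(\zeta)$ respectively---directly from the defining formula of $\sigma$. The only subtle comparisons are on the leaf $a$ of $\st_A$ and on the root of $\st_B$. The ``$\sigma$ first'' path degrafts the marking $\beta(E_a)=\gamma(\zeta)\in B(G,G,G)$, producing via the canonical right projection $B(G,G,G)\to\widetilde{EG}$ the marking $\mathrm{pr}_r(\gamma(\zeta))$ on the leaf, and via the canonical left projection $B(G,G,G)\to EG$ the marking $\zeta$ on the root of $\st_B$; this latter identity follows from Definition \ref{gammadef}, since the left factor inserted by $\gamma$ is precisely the inverse of the product of vertex labels of $\zeta$ and is killed by left projection. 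The ``cooperad first'' path, after passage through the diagonal $EG\to EG\times EG$, assigns $\mathrm{pr}_r(\gamma(\zeta))$ to the leaf $a$ of $\st_A$ and $\zeta$ to the root of $\st_B$ by the very formula of $\sigma$, so the two outputs agree on every branch.

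The main obstacle will be making rigorous the translation between the uniform $\mathsf{B}(\op{Q}(1))=BG$-marking formulation of the cooperad structure in Proposition \ref{coopmap} (as given by Corollary \ref{blcor}) and the refined augmented $G$-marking representation that $\sigma$ naturally produces, and in particular to verify that the ``duplicate the $\op{Q}(1)$-marking'' prescription of Proposition \ref{coopmap} is realized, in the augmented setting, by the canonical right and left projections $B(G,G,G)\to\widetilde{EG},EG$ described above. The strongly augmented hypothesis on $\op{P}\rtimes G$ is what enables this translation: inserting units on the free side of the degrafted branch is absorbed by the SDP identifications, ensuring that the two prescriptions produce the same actual point in $\mathsf{B}(\op{P}\rtimes G)(A)\wedge\mathsf{B}(\op{P}\rtimes G)(B)$ and not merely the same equivalence class of representatives.
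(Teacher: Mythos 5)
Your proposal takes essentially the same approach as the paper: both reduce the verification to a branch-by-branch comparison of the markings on the two factors, and both isolate the only nontrivial check at the degrafted branch $E_a$, where the leaf of $\st_A$ picks up $\mathrm{pr}_r(\gamma(\zeta))$ and the root of $\st_B$ recovers $\zeta$ by discarding the left-module label of $\gamma(\zeta)$. One small correction to your closing paragraph: what allows the left-module label to be discarded on the root branch of $\st_B$ is not the SDP identifications but rather identification (3) of Lemma~\ref{description} --- the new root edge has weight $0$, and the strongly augmented hypothesis ensures that applying the augmentation to the adjacent unstable label simply removes it rather than producing the basepoint; the SDP identifications play no role in this particular step.
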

\begin{proof}  This is a straight forward diagram chase which we briefly describe.  Consider $[\zeta\wedge (\st, p_\ast)]\in EG_+\wedge_G\mathsf{B}(\op{P})(n)$.  The non trivial degraftings correspond to edges of $\st$ so we fix such an edge $E\in \text{Ed}(\st)$.  If we first degraft at $E$ and then apply $\sigma$, the cooperad structure degrafts the tree and uses the diagonal on the $EG$ factor, labeling both root branches by $\zeta$.  Applying $\sigma$ means non root branches will be labeled with $\gamma(\zeta)$ on both the left and the right factors.  On the other hand if we first apply $\sigma$ before degrafting, the non-root branches become labeled with $\gamma(\zeta)$ before degrafting.  The nontrivial degraftings still correspond to the edges of $\st$, and degrafting at $E$ will preserve these branch labels, however on the new root branch of the right hand factor, the root edge will have $0$ weight and so can be removed.  This corresponds to removing the left module of $\gamma(\zeta)$ which gives back $\zeta$ as desired. \end{proof}


\section{Discussion and future directions}\label{discsec}

To conclude we will consider several implications and future directions of our main theorem.  Our principal aim is to show how the results of this paper may be combined with a conjectural equivariant self-duality of the little disks in spectra to better understand Koszul duality between the moduli spaces of punctured spheres and their Deligne-Mumford compactifications.  For this, we first recall some background.
 
The space $\op{M}_n$ is configurations of $n$ points on a sphere modulo the action of M{\"o}bius transformations.  The space $\overline{\op{M}}_{n}$ is a compactification of $\op{M}_n$ allowing for nodal configurations of points.  The homology of $\overline{\op{M}}_{\ast+1}$ is a Koszul operad whose Koszul dual is (a degree shift of) the homology of the open moduli space $\Sigma H_\bullet(\op{M}_{\ast+1})$.  The operad structure of $H_\bullet(\overline{\op{M}}_{\ast+1})$ arises from the topological operad $\overline{\op{M}}_{\ast+1}$ by freely identifying marked points to create a new node.  On the other hand the operad $\Sigma H_\bullet(\op{M}_{\ast+1})$, called the gravity operad by Getzler \cite{Geteq}, does not arise directly as the homology of a topological operad.

The topological operad $\overline{\op{M}}_{\ast+1}$ is related to the framed little disks, $fD_2$, by the functor $L\colon \{\text{operads}\}\to \{\text{reduced operads}\}$ which was introduced in subsection $\ref{comparisonsec}$.

\begin{theorem}\cite{DC}\label{DC}  Let $\widetilde{L}$ denote the total left derived functor of $L$.  Then $\widetilde{L}(fD_2)\sim \overline{\op{M}}_{\ast+1}$. \end{theorem}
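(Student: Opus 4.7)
My plan is to verify the claim by constructing an explicit tree-based model for $\widetilde{L}(fD_2)$ and exhibiting a comparison map to the natural stratification of $\overline{\op{M}}_{\ast+1}$. Recall $L$ is the left adjoint to the inclusion of reduced operads, given by the pushout $\op{Q}\leftarrow\op{Q}(1)\to\op{I}$, so $\widetilde{L}$ homotopically trivializes the arity-one part. Since $fD_2(1)\simeq SO(2)$ while $\overline{\op{M}}_{\ast+1}$ remembers no relative framing at a node, the geometric heuristic is that $\overline{\op{M}}_{\ast+1}$ is precisely the operad obtained by trivializing the unary circle in $fD_2$ in the derived sense, and the proof should make this heuristic literal.

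First I would fix a cofibrant model of $fD_2$, for example the Boardman--Vogt $W$-construction $W(fD_2)$, whose arity-$n$ term consists of $fD_2$-labeled rooted $n$-trees with edge weights in $[0,1]$, subject to operadic composition at weight zero. Invoking the semi-direct-product identifications of Subsection $\ref{secsdp}$, each stable vertex label may be taken in $D_2$ with any $SO(2)$-twists displaced onto unary vertices on adjacent edges. Since $L$ is left Quillen, applying it strictly to $W(fD_2)$ computes $\widetilde{L}(fD_2)$ up to homotopy, and the resulting model is: $D_2$-labeled weighted stable trees in which any standalone arity-one $SO(2)$-decoration has been collapsed to the operadic identity.

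Next, I would construct a comparison $\Phi\colon L(W(fD_2))(n)\to\overline{\op{M}}_{n+1}$ sending a $D_2$-labeled weighted stable tree $\st$ to the nodal genus-zero curve whose dual graph is $\st$, whose irreducible component at vertex $v$ is the configuration $x_v\in D_2(|v|)\simeq\op{M}_{|v|+1}$, and whose modulus at each node is governed by the corresponding edge weight (weight $0$ recovering operadic composition inside $D_2$, maximal weight producing a fully opened node). That $L$ has killed the arity-one $SO(2)$ matches the absence of relative framing data at a node of $\overline{\op{M}}$, so $\Phi$ descends to the quotient. Operadicity of $\Phi$ is verified by comparing grafting on the source with nodal gluing on the target.

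Finally, to show $\Phi$ is a levelwise weak equivalence, I would pass to the natural stratifications of both sides indexed by stable rooted $n$-trees. The open stratum over $\st$ on the source is $\prod_v D_2(|v|)$ times an open cube of edge weights, while on the target it is $\prod_v\op{M}_{|v|+1}$, and $\Phi$ restricts to the classical equivalence $D_2(m)\simeq\op{M}_{m+1}$ on each factor. An induction on stratum codimension then assembles these into a global equivalence. The main obstacle will be the boundary analysis: verifying that the gluing of strata on the $L(W(fD_2))$-side, dictated by the interplay of $W$-contraction at weight-zero edges with $L$-trivialization of $SO(2)$ at maximally extended edges, matches the nodal degeneration topology of $\overline{\op{M}}_{n+1}$ near each stratum boundary. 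This is essentially a circle-bundle collapse calculation at each node; the intertwining perspective developed in this paper---in particular Theorem A, which recasts $\mathsf{B}(fD_2)$ as the Borel construction on $\mathsf{B}(D_2)$---should furnish a dual perspective on this collapse and provide a useful consistency check.
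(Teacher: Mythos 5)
This theorem is quoted from Drummond-Cole's paper \cite{DC} and is not proved in the present paper, so there is no internal proof to compare against; I will instead evaluate your sketch on its own terms, with reference to what Drummond-Cole actually does.

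There is a genuine gap, and it sits exactly in the step you pass over most quickly. You assert that applying $L$ strictly to $W(fD_2)$ yields ``$D_2$-labeled weighted stable trees in which any standalone arity-one $SO(2)$-decoration has been collapsed to the operadic identity.'' This cannot be the right model, because that space is levelwise homotopy equivalent to $D_2$, not to $\overline{\op{M}}_{\ast+1}$: already in arity $3$ one would get $D_2(3)\simeq\op{M}_{0,4}\simeq\mathbb{CP}^1\setminus\{0,1,\infty\}$ rather than $\overline{\op{M}}_{0,4}=\mathbb{CP}^1$, which has different homology. The error is a conflation of two different edge-weight regimes in the $W$-construction. The semi-direct-product identification that displaces an $SO(2)$-twist from a stable vertex onto an adjacent unary vertex requires the connecting edge to have weight $0$. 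By contrast, the relation imposed by the pushout $L$ is the smallest operadic congruence forcing $W(fD_2)(1)$ to the unit, and operadic composition in $W$ creates edges of weight $1$; so the $L$-relation collapses an arity-one cap only across a weight-$1$ edge. At intermediate weights the circle data survives, and it is precisely this delayed, homotopy-level trivialization that produces the extra boundary strata of $\overline{\op{M}}$. A sketch that kills the circle everywhere at once would, if made precise, disprove the theorem rather than prove it.

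Beyond that, your comparison map $\Phi$ and the subsequent induction on stratum codimension are not incidental ``obstacles'': constructing a family of equivalences $D_2(m)\to\op{M}_{m+1}$ compatible with $S_m$-actions, continuous as edge weights tend to $1$, and operadic, is the substantive content, and you have not begun it. Finally, Drummond-Cole's actual argument is not a direct stratum-by-stratum comparison between $L(W(fD_2))$ and $\overline{\op{M}}_{\ast+1}$; it proceeds through explicitly constructed intermediate models and a zig-zag of equivalences. Your route might in principle be completed into a different proof, but as written the central model computation is wrong, not merely incomplete.
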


A model for $\widetilde{L}(fD_2)$ is given by $L(\widetilde{fD}_2)$ for any cofibrant replacement $\widetilde{fD_2}\stackrel{\sim}\to fD_2$.  To connect this to our main result we first prove the following lemma.  It uses the notation $L$ and $\widetilde{R}$ from Section $\ref{comparisonsec}$.
\begin{lemma}\label{lemm}  The operads $\Omega (\mathsf{B}(D_2)_{S^1})$ and $L (\Omega  \mathsf{B}(fD_2))$ are weakly equivalent.
\end{lemma}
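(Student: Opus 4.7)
The plan is to combine Theorem A with the intertwining isomorphism of Proposition \ref{redint}. Applying Theorem A to the reduced $S^1$-operad $D_2$ (so that $D_2 \rtimes S^1 = fD_2$), I obtain a weak equivalence of strongly augmented cooperads
\begin{equation*}
ES^1_+ \wedge_{S^1} \mathsf{B}(D_2) \stackrel{\sim}{\longrightarrow} \mathsf{B}(fD_2).
\end{equation*}
Note that neither side is reduced: both have arity $1$ term $BS^1_+$. The notation $\mathsf{B}(D_2)_{S^1}$ appearing in the lemma is interpreted as the associated reduced cooperad $\tilde{R}\bigl(ES^1_+ \wedge_{S^1} \mathsf{B}(D_2)\bigr)$, obtained by collapsing the arity $1$ term to $S^0$ via $\tilde{R}$.

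Next, I apply the reduction functor $\tilde{R}$ to the above equivalence. Since $\tilde{R}$ is a levelwise restriction and $S^0 \to BS^1_+$ is a weak equivalence of the relevant arity $1$ terms after passing to homotopy orbits (both computing the underlying reduced piece), $\tilde{R}$ preserves the equivalence and yields
\begin{equation*}
\mathsf{B}(D_2)_{S^1} \stackrel{\sim}{\longrightarrow} \tilde{R}\mathsf{B}(fD_2).
\end{equation*}
I then apply the cobar construction $\Omega$ and invoke Proposition \ref{redint}, which provides a natural isomorphism $\Omega\tilde{R} \cong L\Omega$. This gives
\begin{equation*}
\Omega\,\mathsf{B}(D_2)_{S^1} \;\sim\; \Omega\tilde{R}\,\mathsf{B}(fD_2) \;\cong\; L\,\Omega\,\mathsf{B}(fD_2),
\end{equation*}
which is the desired weak equivalence of reduced operads.

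The main obstacle in this plan is ensuring that the cobar construction $\Omega$ preserves the weak equivalence produced after the application of $\tilde{R}$. This is a homotopy-theoretic statement, and the excerpt explicitly flags that a full treatment of the homotopy theory of bar--cobar duality for non-reduced operads in spaces has not yet been developed. One may either interpret $\Omega$ here as its total left derived functor (so the equivalence is automatic), or, more concretely, verify cofibrancy of the cooperads in question by inspection of their combinatorial tree models as in Section \ref{cooperad} so that $\Omega$ preserves the equivalence levelwise. Either route depends on the standing homotopical assumptions that are alluded to in Section \ref{discsec}, and so the claimed equivalence should be understood in the corresponding derived sense.
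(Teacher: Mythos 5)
Your proposal follows essentially the same route as the paper: apply Theorem A, pass to the reduction $\widetilde{R}$, apply $\Omega$, and invoke Proposition \ref{redint} to exchange $\Omega\widetilde{R}$ with $L\Omega$. Two remarks. First, the paper reads $\mathsf{B}(D_2)_{S^1}$ as the \emph{strict} orbit cooperad $\mathsf{B}(D_2)/S^1$ rather than defining it as $\widetilde{R}$ of the Borel construction; the bridge between the two is made explicitly there by observing that the $S^1$-action on $\mathsf{B}(D_2)$ is free in arity $\geq 2$ (so Borel $\simeq$ strict quotient there) and that both arity-$1$ terms are already $S^0$. Your reformulation is equivalent up to homotopy, but it bypasses rather than addresses this freeness point, and your stated justification that ``$S^0 \to BS^1_+$ is a weak equivalence'' is not correct as written; what is actually used is that $\widetilde{R}$ replaces both arity-$1$ terms by $S^0$, so the equivalence is preserved trivially in arity $1$ and unchanged in arity $\geq 2$. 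Second, the ``main obstacle'' you flag — whether $\Omega$ preserves the equivalence — is resolved more cleanly in the paper: since Theorem A produces an actual (levelwise) \emph{homotopy} equivalence, not merely a weak one, one can cite the fact that bar and cobar send levelwise homotopy equivalences to weak equivalences (\cite{AC}, Proposition 8.5 and Remark 8.6) without any cofibrant replacement or derived-functor hedging. Noting that the input is a genuine homotopy equivalence is the key observation you are missing; with it, your caveat paragraph becomes unnecessary.
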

\begin{proof}
Using our main theorem, there is a homotopy equivalence of cooperads $\mathsf{B}(D_2)_{hS^1}\stackrel{\sim}\to \mathsf{B}(fD_2)$.  Applying the cooperadic reduction $\widetilde{R}$ we have homotopy equivalences:
\begin{equation*}
\mathsf{B}(D_2)_{S^1}\stackrel{\sim}\leftarrow \widetilde{R}\mathsf{B}(D_2)_{hS^1}\stackrel{\sim}\to \widetilde{R}\mathsf{B}(fD_2)\end{equation*}
  Here we have used the fact that the $S^1$ action is free in arity $\geq 2$, and we dropped the $\widetilde{R}$ notation on the left hand side since $\mathsf{B}(D_2)_{S^1}(1)$ is already trivial.

We then observe that the bar and cobar constructions take levelwise homotopy equivalences to weak equivalences.  This follows as in \cite{AC} Proposition 8.5 (see also Remark 8.6), but in our case we do not need to take termwise cofibrant replacements, since we started with an actual homotopy equivalence.  
It follows that $\Omega (\mathsf{B}(D_2)_{S^1})$ and $ \Omega(\widetilde{R}  \mathsf{B}(fD_2))$ are weakly equivalent.  
 
Finally we use Proposition $\ref{redint}$ to exchange $\Omega \widetilde{R}$ with $L \Omega$ from which the claim follows.
\end{proof} 

Lemma $\ref{lemm}$ motivates us to consider the implications of our results in a category in which the bar-cobar construction can be used to produce a resolution of the input.  From Ching's result in \cite{Ching2}, one knows this is the case in certain categories of spectra, and so we conclude by examining our results in that context.

\subsection{Passing to Spectra}

To this point we have considered the bar construction for topological operads following Ching \cite{Ching}.  But loc.cit.\ defines this cooperad structure for more general base categories, namely for operads valued in any symmetric monoidal category suitably enriched, tensored and cotensored over Top$_\ast$.  This encompasses not only Top$_\ast$ itself but also suitable categories of spectra.  We begin by fixing such a category of spectra and recalling several relevant properties.

Following \cite{Ching2} we let Spec be the category of S-modules of \cite{EKMM}.  It is a closed symmetric monoidal category which receives a strong symmetric monoidal functor $\Sigma^\infty\colon $Top$_\ast\to$ Spec which is a left adjoint.  We denote the unit by $S$, the monoidal product by $\wedge$, and the internal hom functor by $F_S(-,-)$.  We will use the fact that $F_S(\Sigma^\infty X,M)$ can be written in terms of mapping spaces (which we denote $F(X,M)$ after loc.cit.) and refer to \cite{EKMM} Proposition II.1.4 for the precise statement.

If $\op{P}$ is an operad in spaces we write $\Sigma^\infty\op{P}$ for the operad in spectra obtained by levelwise application of $\Sigma^\infty$.  We also remark that the triple of adjoint functors $(L,\iota, R)$ relating operads and reduced operads in spaces (see Section $\ref{comparisonsec}$) may also be constructed between operads and reduced operads in spectra.  Since $\Sigma^\infty$ is a strong monoidal left adjoint, it preserves both colimits and $\wedge$.  It follows that $L$ and $\mathsf{B}$ commute with $\Sigma^\infty$.  We furthermore observe $\iota$ and $R$ commute with $\Sigma^\infty$ by inspection.

\subsubsection{Derived Koszul dual $K$, after \cite{Ching2}}  Working in spectra gives us access to the results of \cite{Ching2} and a homotopy involutivity result about the bar-cobar construction.  Moreover this homotopy involutivity result may be phrased purely in the language of operads using Spanier-Whitehead duality.

To recall this result we first define $\mathbb{D}(-):=F_S(-,S)$.  This is a contravariant functor equipped with a natural map $\mathbb{D}(X)\wedge \mathbb{D}(Y)\to \mathbb{D}(X\wedge Y)$, and so takes cooperads to operads (\cite{Ching2} Definition 4.3). We then define $K(-):=\mathbb{D}(\mathsf{B}(-))$.   Our main theorem may be rephrased in this language:

\begin{corollary}\label{cor4}  Let $\op{P}$ be a reduced operad in based $G$-spaces.  There is a weak equivalence of operads in spectra:
	\begin{equation*}
	K(\Sigma^\infty\op{P})^{hG}\sim K(\Sigma^\infty(\op{P}\rtimes G))
	\end{equation*}
\end{corollary}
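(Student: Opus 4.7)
The plan is to lift Theorem A to spectra via $\Sigma^\infty$, dualize with $\mathbb{D}$, and identify the dual of homotopy orbits with homotopy fixed points of the dual. First I would apply the functor $\Sigma^\infty$ levelwise to the homotopy equivalence of cooperads in Theorem A. Since $\Sigma^\infty$ is strong symmetric monoidal, a left adjoint, and commutes with $\mathsf{B}$ (as noted in the excerpt), applying it yields a weak equivalence of cooperads in Spec
\begin{equation*}
\Sigma^\infty(EG_+) \wedge_G \mathsf{B}(\Sigma^\infty \op{P}) \stackrel{\sim}\longrightarrow \mathsf{B}(\Sigma^\infty(\op{P}\rtimes G)).
\end{equation*}
(Here $\mathsf{B}$ on the right denotes the bar construction for operads in spectra as in \cite{Ching2}, and strong monoidality lets us pass $\Sigma^\infty$ through the smash product and the $G$-coinvariants.)

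Next I would apply Spanier--Whitehead duality $\mathbb{D}(-)=F_S(-,S)$ levelwise. By \cite{Ching2} Definition 4.3, $\mathbb{D}$ sends cooperads to operads, so the right side becomes $K(\Sigma^\infty(\op{P}\rtimes G))$ and the left side becomes $\mathbb{D}(\Sigma^\infty(EG_+) \wedge_G \mathsf{B}(\Sigma^\infty \op{P}))$. The key identification is the standard Spanier--Whitehead formula relating duals of homotopy orbits to homotopy fixed points of duals: for a $G$-spectrum $X$,
\begin{equation*}
F_S\bigl(\Sigma^\infty(EG_+) \wedge_G X,\, S\bigr) \;\simeq\; F(EG_+, F_S(X,S))^G \;=\; \mathbb{D}(X)^{hG},
\end{equation*}
using the adjunction $(\Sigma^\infty \dashv \Omega^\infty)$-style identity $F_S(\Sigma^\infty Y \wedge M, S)\cong F(Y, F_S(M,S))$ from \cite{EKMM} II.1.4 together with the comparison between $G$-coinvariants and $G$-fixed points under $F_S(-,S)$ (which is a consequence of the free $G$-action on $EG$). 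Applying this levelwise with $X=\mathsf{B}(\Sigma^\infty\op{P})(n)$ identifies the left side with $K(\Sigma^\infty\op{P})^{hG}$ in each arity.

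To finish it suffices to verify that these identifications assemble into an equivalence of operads, not merely of underlying $\mathbb{S}$-modules in spectra. The right-hand identification is just the definition of $K$. For the left-hand identification, the operad structure on $\mathbb{D}(-)$ of a cooperad is produced by the lax monoidal structure of $F_S(-,S)$ (cf.\ \cite{Ching2}), and the levelwise Spanier--Whitehead identification above is natural in the $G$-spectrum variable and compatible with smash products via the oplax monoidal diagonal $EG_+\to EG_+\wedge EG_+$ used in the corollary of Section \ref{secbarg}. Tracing through these naturalities shows that the two routes producing operads from $\Sigma^\infty(EG_+)\wedge_G \mathsf{B}(\Sigma^\infty\op{P})$ agree. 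The main obstacle I anticipate is this last bookkeeping: carefully matching the oplax monoidal structure on $EG_+\wedge_G(-)$ used to build the cooperad structure of $\mathsf{B}(\op{P})_{hG}$ with the lax monoidal structure on $F_S(-,S)$ used to build the operad structure, so that the standard ``dual of orbits is fixed points of the dual'' equivalence is promoted to a morphism of operads in Spec. Once this compatibility is in place, composing with the equivalence from Theorem A (prolonged to spectra) yields the desired weak equivalence of operads.
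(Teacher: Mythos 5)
Your proposal is correct and follows essentially the same route as the paper: lift the levelwise homotopy equivalence of Theorem A to spectra via $\Sigma^\infty$, apply $\mathbb{D}=F_S(-,S)$ levelwise, and identify the Spanier--Whitehead dual of homotopy orbits with homotopy fixed points of the dual. The paper's proof is terser --- it cites EKMM Proposition II.1.4 and Theorem I.8.5 to obtain the levelwise weak equivalence $F_S(\Sigma^\infty\mathsf{B}(\op{P}\rtimes G),S)\sim F_S(\Sigma^\infty\mathsf{B}(\op{P})_{hG},S)$ and leaves the orbits-to-fixed-points identification and the operad-structure compatibility implicit, whereas you flag and sketch those steps explicitly.
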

\begin{proof}  Proposition II.1.4 of \cite{EKMM} allows us to write $F_S(\Sigma^\infty\mathsf{B}(\op{P}\rtimes G),S)$ in terms of the spectrum of mapping spaces $F(\mathsf{B}(\op{P}\rtimes G),S)$. Then we may use the fact that $F(-,S)$ preserves levelwise homotopy equivalences, along with our main theorem, to apply Theorem I.8.5 of \cite{EKMM} and conclude $F_S(\Sigma^\infty\mathsf{B}(\op{P}\rtimes G),S)\sim F_S(\Sigma^\infty\mathsf{B}(\op{P})_{hG},S)$, from which the claim follows. \end{proof}

	

One advantage of viewing our result in this language is we have access to Theorem 4.11 of \cite{Ching2} which shows that if $\op{P}$ is a reduced operad in spectra which is levelwise cofibrant and which satisfies suitable finiteness hypotheses then
\begin{equation*}
K\widetilde{K}(\op{P})\sim\op{P}
\end{equation*}
where $(\widetilde{-})$ denotes levelwise cofibrant replacement.

We now recall two conjectures from \cite{Ching2}.  We adapt the notation $\Sigma^\infty_+$ to denote $\Sigma^\infty ( (-)_+)$ so as to emphasize that our operads of interest here are {\it a priori} non-based.

{\bf Conjecture 5.4 of} \cite{Ching2}:  The operad $K(\Sigma^\infty_+D_n)$ is equivalent to a suitable operadic suspension of $\Sigma^\infty_+D_n$ in the category of spectra. 

{\bf Conjecture 5.5 of} \cite{Ching2}:  The operad $K(\Sigma^\infty_+ \overline{\op{M}}_{\ast+1})$ is equivalent to an operadic suspension of $\Sigma^\infty_+ R(D_2)^{S^1}$.

 Technically, the statement in \cite{Ching2} invokes the transfer operad of the little disks \cite{Westerland}, but this is equivalent to $\Sigma^\infty_+D_2^{S^1}$ as a reduced operad (Corollary 2.8 of loc.cit).

\subsubsection{Equivariant conjecture}

We would now like to tie the above conjectures together with our main theorem.  The first ingredient is a strengthening of Ching's Conjecture 5.4 in the case $n=2$:

\begin{conjecture}\label{conj} The operad $K(\Sigma^\infty_+D_2)$ is equivalent to a suitable operadic suspension of $\Sigma^\infty_+D_2$ in the category of $S^1$-spectra. 
\end{conjecture}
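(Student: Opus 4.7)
The plan is to strengthen Ching's Conjecture~5.4 (for $n=2$) by producing an $SO(2)$-equivariant self-duality map and then verifying it is an equivalence of $S^1$-spectra. The first step is to replace $D_2$ by a cofibrant model in the category of $SO(2)$-operads in based spaces whose bar construction admits a geometric description compatible with the rotation action. A natural candidate is (a based variant of) the Fulton-MacPherson-Axelrod-Singer compactification $FM_2$, whose boundary stratification is indexed by stable trees and which carries a levelwise $SO(2)$-action by simultaneous rotation of configurations. In arity $n\geq 2$ this action is free---a labeled configuration modulo translation and scaling is never fixed by a nontrivial rotation---mirroring the freeness invoked in Lemma~\ref{lemm}.

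The second step is to construct, in the category of $S^1$-spectra, a candidate self-duality map from the operadic suspension of $\Sigma^\infty_+ FM_2$ to $K(\Sigma^\infty_+ FM_2)$. One approach is to lift the Koszul self-duality of the Gerstenhaber operad $H_*(D_2)$ to a spectrum-level statement using the tree-level combinatorics of the boundary stratification of $FM_2(n)$. Concretely, the codimension-$k$ boundary strata of $FM_2(n)$ are products of smaller $FM_2(k_i)$'s indexed by stable trees with $k$ internal edges, exactly matching the cellular structure of $\mathsf{B}(\Sigma^\infty_+ FM_2)$; Spanier-Whitehead duality on these manifolds with corners should then furnish the required pairing. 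Alternatively, one could work directly with the real-algebraic graph-complex models of \cite{KWill}, into which both the $SO(2)$-action and a self-duality pairing are built combinatorially.

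The third step is to check the equivalence. The equivariance of the constructed map is manifest from the geometric construction. The nonequivariant equivalence is then exactly Ching's Conjecture~5.4 for $n=2$. Once this is in hand, one upgrades to an equivariant equivalence by exploiting the freeness of the $SO(2)$-action in arity $n\geq 2$: the bar construction $\mathsf{B}(\Sigma^\infty_+ D_2)(n)$ inherits a free diagonal $S^1$-action because every vertex in any stable $n$-tree has arity $\geq 2$, and a nonequivariant equivalence between spectra with free $S^1$-actions of this form is automatically an $S^1$-equivalence.

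The main obstacle is clearly the nonequivariant self-duality of $D_2$, namely Ching's Conjecture~5.4, which remains open despite strong supporting evidence from the Koszul self-duality of $H_*(D_2)$. A secondary obstacle is to verify that the geometric self-duality map constructed from $FM_2$ is genuinely compatible with the cooperad structure of Proposition~\ref{coopmap} after dualizing, rather than merely being an equivalence of underlying symmetric sequences; for this, the intertwining Proposition~\ref{redint} and the combinatorial bookkeeping of Section~\ref{mainthmsec} should play essential roles.
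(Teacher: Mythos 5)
This statement is labeled a \emph{Conjecture} in the paper, and the paper does not prove it. It offers only informal evidence: by analogy with the algebraic setting, where equivariant formality of the little $2$-disks is known from the cited results of Khoroshkin--Willwacher and Campos--Ward. There is therefore no proof in the paper against which to compare your attempt.

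As a research sketch, your outline identifies the right structure and the right obstruction: you correctly point out that the non-equivariant part of your claim is exactly Ching's Conjecture~5.4, which remains open, and that the promotion from a non-equivariant equivalence to an $S^1$-equivalence requires first producing an \emph{equivariant} map (the freeness of the $S^1$-action in arities $\geq 2$ then lets you test it on underlying spectra). However, the genuine mathematical content of the conjecture lies precisely in your step two, and there the argument is only gestured at (``Spanier-Whitehead duality on these manifolds with corners should then furnish the required pairing''). That step is not a proof, and indeed one would expect constructing such a pairing compatibly with the cooperad structure to be roughly as hard as the conjecture itself. So your proposal is a plausible route and a useful clarification of what would need to be done, but it does not close the conjecture and does not go beyond what the paper already indicates is open.
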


For evidence we recall that Conjecture 5.4 was based on an analogy with the algebraic setting, and we may update this analogy to include the recent results Theorem 1.1 of \cite{KWill} and Theorem 2.8 of \cite{WRC}, which show formality can be made equivariant in the algebraic setting when $n=2$.

The importance of Conjecture $\ref{conj}$ is that it implies not only Conjecture 5.4, but the results of this paper suggest that it implies Conjecture 5.5 as well. 
To see this, we first apply Corollary $\ref{cor4}$ to conclude:
\begin{equation*}
K(\Sigma^\infty_+\op{D}_2)^{hS^1}\sim K(\Sigma^\infty_+fD_2).
\end{equation*}
Combining this with Conjecture $\ref{conj}$ would imply that the derived Koszul dual of the stabilization of the framed little disks is (up to suspension) Westerland's spectral model for the (non-reduced) gravity operad.  

We may then pass to the reduction by taking $KR(\widetilde{-})$ of both sides and and interchanging $KR$ with  $LK$.  It remains to conclude $LK\widetilde{K}(\Sigma^\infty_+\widetilde{fD}_2)$ is a derived model for $L(\Sigma^\infty_+\widetilde{fD}_2)$, and this would follow from a suitable generalization of the homotopy theory of bar-cobar duality from the reduced to the non-reduced case, in particular using a non-reduced version of Theorem 4.11 of \cite{Ching2}.

Let us remark that, following \cite{KWill} in the algebraic case, it would be reasonable to state Conjecture $\ref{conj}$ for all $D_{2n}$ with their $SO(2n)$-action.  Moreover, following Section 6.2 \cite{Westerland}, we could consider $D_4$ with its $SU(2)$-action.  Westerland defines the four dimensional gravity operad $Grav^4$ as the homotopy transfer of the $SU(2)$ action on $D_4$.  In parallel with the above discussion, our result may be used to compare the derived Koszul dual of $Grav^4$ with the $SU(2)$-framed little four disks operad.


So in summary, the results of this paper may be combined with the following interesting future directions to augment our understanding of Koszul duality and moduli spaces:

\begin{enumerate}
	\item Generalize the homotopy theory of bar-cobar constructions in spaces and spectra \cite{Ching,AC,Ching2} from reduced to non-reduced operads and cooperads.
	\item Give a direct topological proof that the cohomology of the cooperad $\mathsf{B}(D_2)/S^1$ is the gravity operad.
	\item Prove that the Spanier-Whitehead dual of the stabilization of $\mathsf{B}(D_2)$ is (up to suspension) the stabilization of $D_2$ {\it in the category of $S^1$-operads}.
\end{enumerate}


\appendix

\section{Diagrammatic $EG$ and $BG$}\label{sec:diagrams}  The combinatorial description of the operadic bar construction generalizes a combinatorial description of the bar construction of a topological monoid in terms of $1$ trees.  We depict such trees horizontally with the convention that the leaf is on the right and the root is on the left.  Since this description is used frequently in the body of the text, we spell out these elementary considerations here.

Let $G$ be a topological monoid.  We write $BG:=B(\ast,G,\ast)$ and $EG:=B(\ast,G,G)$, $\EG=B(G,G,\ast)$ where $B(-,-,-)$ denotes the geometric realization of the two sided bar construction in the monoidal category of unbased spaces with the Cartesian product.  We also consider the space $B(G,G,G)$ and morphisms $G\stackrel{\mu}\leftarrow B(G,G,G)\to EG \to BG$ between them.
Here $\mu$ is the map induced by multiplication $G\times G^n\times G \to G$, and the other maps are induced by the $G$ bimodule map $G \to \ast$. 

A point in $BG$ (resp.\ $EG$) may be represented by a diagram on the left (resp.\ right):
\newline
\begin{center}
	\includegraphics[scale=1.5]{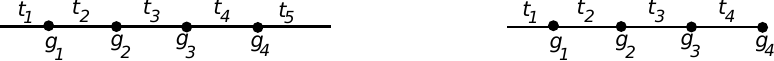}
\end{center}
where $g_i\in G$, $0\leq t_i \leq 1$ and $\sum t_i=1$.  A point is not represented uniquely by such a diagram, $BG$ has identifications generated by:
\begin{center}
\includegraphics[scale=.45]{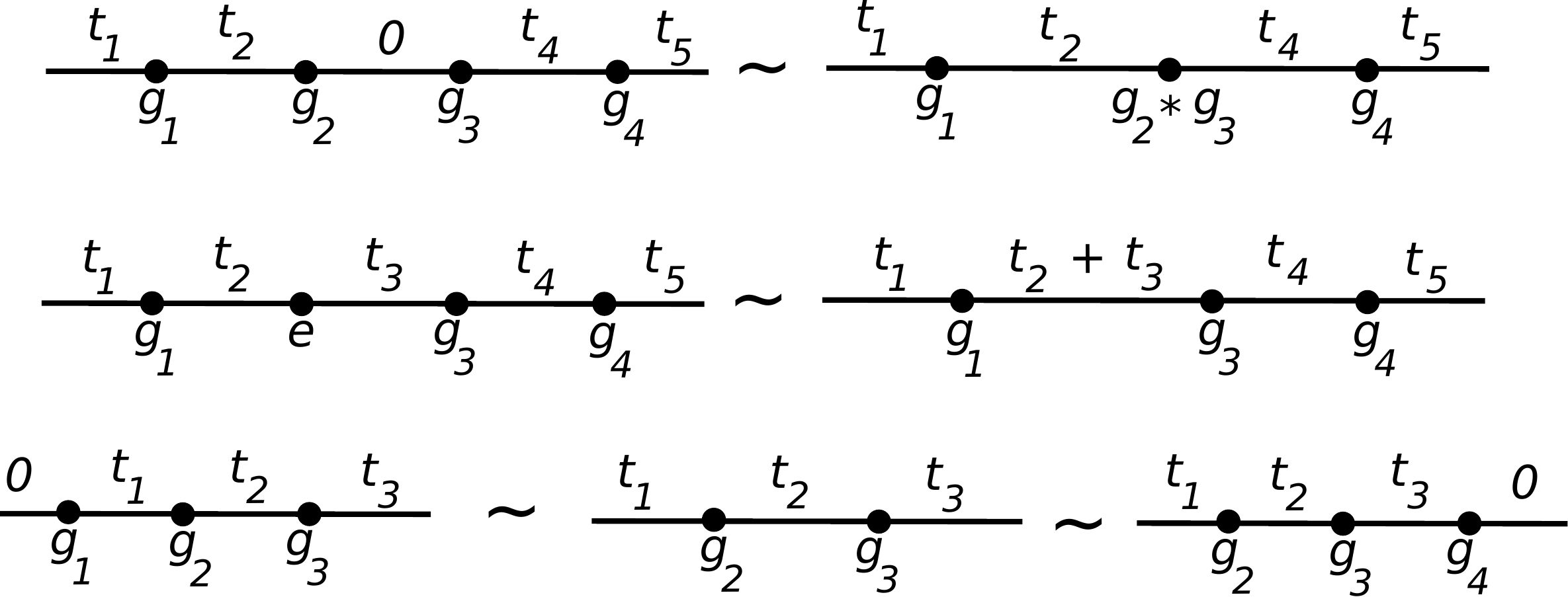}
\end{center}
Here $e\in G$ is the unit and we observe that this description is unambiguous for $e$ on an end.  Similarly for $EG$ except on the right hand side which identifies:

\begin{center}
	\includegraphics[scale=1.5]{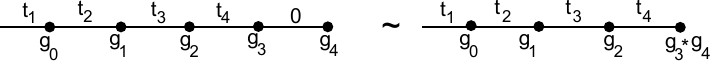}
\end{center}

The space $EG$ (resp.\ $\EG$) is a right (resp.\ left) $G$-space with $G$ action given by multiplying the right (resp.\ left)  end point on the right (resp.\ left) hand side.  The diagrammatic description makes it clear that this is a well defined and free $G$-action.  The map $EG\to BG$ (resp. $\EG\to BG$) is depicted by removing the end point which we observe diagrammatically to be well defined. 


A homotopy which contracts $EG$, call it $h_e$, can be defined as $h_e(s,-)$ grafting an $e$ labeled vertex with weight $s$ and scaling all other weights by $(1-s)$.  For example $h_e(0,p)$ is on the left then $h_1(s,p)$ is center and $h_1(1,p)$ is on the right:
\newline
\begin{center}
	\includegraphics[scale=1.5]{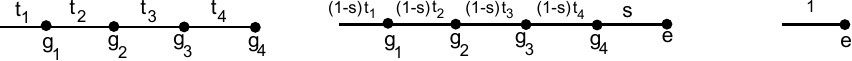}		
\end{center}
Note we could contract to any point in $EG$ by such a grafting and scaling.

Finally, a point in the space $B(G,G,G)$ may be represented non-uniquely by a diagram:
\begin{center}
	\includegraphics[scale=1.5]{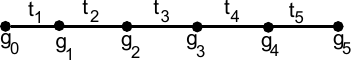}
\end{center}
where again $g_i,t_i$ and equivalence relations as defined above.  It can be seen diagrammatically that multiplying the vertex labels in the prescribed order is a well defined operation.  Grafting and scaling as above gives a homotopy between $B(G,G,G)$ and $G$.

\subsection*{Acknowledgment}  

I would like to thank Gabriel Drummond-Cole for several helpful discussions about this result. The treatment of cooperad structures in Section $\ref{cooperad}$ and in particular Proposition $\ref{coopmap}$ were aided significantly by his input.  I would also like to thank Greg Arone for his help and encouragement.  I would like to gratefully acknowledge the support of both Stockholm University and IHES where this paper was written.  Finally I would like to thank an anonymous referee for a number of helpful suggestions and comments.


\end{document}